\newtheorem{thm}{Theorem}[section]
\newtheorem{lem}[thm]{Lemma}
\newtheorem{prop}[thm]{Proposition}
\newtheorem{fact}[thm]{Fact}
\newtheorem{question}[thm]{Question}
\newtheorem{rem}[thm]{Remark}
\theoremstyle{definition}
\newtheorem{defn}[thm]{Definition}
\theoremstyle{remark}
\numberwithin{equation}{section}
\newtheorem*{example*}{Example}
\newcommand{\eop}{\hfill $\Box$}
\def\text#1{\textrm{#1}}
\def\d{\delta}
\def\D{\mathbb D}
\def \R{\mathbb R}
\def \N{{\mathbb N}}
\def \Z{\mathbb Z}
\def \H{\mathbb H}
\def\D{\mathbb D}
\def\({\biggl(}
\def\){\biggr)}
\def\<{\bold\langle}
\def\>{\bold\rangle}
\title[One-Sided horocycles]{
Density and equidistribution of one-sided 
horocycles of a geometrically finite hyperbolic surface}
\author{Barbara Schapira}
\address{B. Schapira,
L.A.M.F.A.,  C.N.R.S. UMR 6140,
Universit\'e Picardie Jules Verne,
33 rue St Leu
80000 Amiens, France}
\email{Barbara.Schapira@u-picardie.fr}
\keywords{half-horocycles, density, horospherical point, 
 equidistribution, ratio ergodic theorem, geometrically finite hyperbolic surfaces}
\subjclass{37A40,37A17, 37D40}
\begin{document}

\begin{abstract}
On geometrically finite negatively curved surfaces, we give necessary and
sufficient conditions for a one-sided horocycle $(h^su)_{s\ge 0}$ to be
dense in the nonwandering set of the geodesic flow. 
We prove that all dense one-sided orbits $(h^su)_{s\ge 0}$ are
equidistributed, extending results of 
 \cite{Burger} and \cite{Scha2} where 
symmetric horocycles $(h^su)_{-R\le s\le R}$ were considered. 
\end{abstract}
\maketitle

%%%%%%%%%%%%%%%%%%%%%%%%%%%%%%%%%%%%%%%%%%%%%%%%%ù

\section{Introduction}

Hedlund \cite{Hedlund} proved that the horocyclic flow $(h^s)_{s\in\R}$ 
on the unit tangent bundle 
of a finite volume hyperbolic surface is minimal, that is all nonperiodic 
orbits $(h^sv)_{s\ge 0}$ (called in \cite{Hedlund} "right-semihorocycles", 
and here positive half-horocycles) 
are dense. 

On {\em geometrically finite surfaces}, i.e.
 surfaces whose fundamental group is finitely generated, 
it is known (see \cite{Eberlein},  \cite{Dalbo})
 that all nonwandering 
and non periodic orbits of the horocyclic flow are {\em dense} in the sense that the closure 
of $(h^sv)_{s\in\R}$ contains the nonwandering set of the geodesic flow. 
On general hyperbolic surfaces, keeping  this definition of ``dense'' 
horocyclic orbit, we know (\cite {Eberlein} \cite{Dalbo} \cite{Starkov})
 how to characterize dense orbits\,: a horocycle is dense iff 
it is centered at a horospherical point. 
 
However, 
as soon as the fundamental group of the surface is of the second kind, 
i.e. its limit set is strictly included in the boundary at infinity $S^1$ (see section 2), 
we can easily find horocycles $(h^s u)_{s\in\R}$ 
that are globally dense in the nonwandering set $\Omega$ 
of the geodesic flow (in the sense that $\overline{(h^su)_{s\in\R}}\supset \Omega$), 
but with one side 
dense and the other not.

In this note, answering a question of  O. Sarig,
we characterize these horocycles with one side dense and the other not. 
If $u\in T^1 S$, and $\tilde{u}$ is any of its lifts on the unit tangent bundle $T^1\D$ 
of the hyperbolic disc, 
we denote by $u^-\in S^1$ (resp. $u^+$) the negative (resp. positive) 
endpoint in the boundary $S^1=\partial \D$ of the geodesic 
line defined by $\tilde{u}$. 
We prove:

\begin{thm}\label{density} Let $S$ be a geometrically finite hyperbolic
  surface. 
Let $u\in T^1 S$ be s.t. its full unstable horocyclic orbit $(h^su)_{s\in\R}$ is dense in
  $\Omega$. Then the positive half-horocycle $(h^s u)_{s\ge 0}$ 
is dense in $\Omega$ iff $u^-$ is not the
  first endpoint of an interval of $S^1\setminus \Lambda_{\Gamma}$ 
  (where the circle $S^1$ is oriented in the counterclockwise direction). 
\end{thm}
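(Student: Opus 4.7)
My approach is to translate the density question into a problem about the $\Gamma$-orbit of a specific arc on the boundary $S^1$, and to exploit the cyclic-order preservation by $\Gamma$ together with the north--south dynamics of divergent sequences in $\Gamma$.

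Lift $u$ to $\tilde u\in T^1\D$. For the unstable horocyclic flow, $(h^s\tilde u)^-=u^-$ is constant in $s$ and $s\mapsto (h^s\tilde u)^+$ is a homeomorphism of $\R$ onto $S^1\setminus\{u^-\}$. Fix the orientation convention so that $s\ge 0$ corresponds to the open arc $B^+\subset S^1\setminus\{u^-\}$ which approaches $u^-$ from the counterclockwise side as $s\to+\infty$. Then the condition ``$u^-$ is the first counterclockwise endpoint of an interval of $S^1\setminus\Lambda_\Gamma$'' is exactly the statement that $B^+$ contains a gap of $\Lambda_\Gamma$ abutting $u^-$, equivalently that $\Lambda_\Gamma$ does not accumulate on $u^-$ from within $B^+$. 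Density of $(h^s u)_{s\ge 0}$ in $\Omega$ amounts to: for every $v\in\Omega$ with lift $\tilde v\in T^1\D$, there exist $\gamma_n\in\Gamma$ and $s_n\ge 0$ with $\gamma_n h^{s_n}\tilde u\to\tilde v$ in $T^1\D$; on the boundary this reads $\gamma_n u^-\to v^-$ and $\gamma_n(h^{s_n}\tilde u)^+\to v^+$, together with horocyclic base-point convergence, available because the full-density hypothesis forces $u^-$ to be a horospherical limit point.

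\emph{Necessity.} Assume $u^-$ is the first ccw endpoint of a gap $I\subset S^1\setminus\Lambda_\Gamma$, so $I\subset B^+$ abuts $u^-$. Applying the preservation of cyclic order by $\gamma_n$ to the triple $(u^-,(h^{s_n}\tilde u)^+,u^+)$ and taking north--south limits, the repelling limit $r$ of the sequence $\gamma_n$ must lie in $\overline{B^+}\cap\Lambda_\Gamma$ near $u^-$ in order to approximate certain $v\in\Omega$ with $v^-$ located on the ``opposite'' side of $u^-$ from $I$. But the gap $I$ precisely excludes limit points from this portion of $B^+$ near $u^-$, so no such $r$ exists, and these $v$ cannot be approximated by $h^{s_n}u$ with $s_n\ge 0$.

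\emph{Sufficiency.} Conversely, assume $\Lambda_\Gamma$ accumulates on $u^-$ from the $B^+$ side. Given $v\in\Omega$, topological transitivity of $\Gamma$ on $\Lambda_\Gamma^{(2)}$ (non-elementary) produces $\gamma_n\in\Gamma$ with $\gamma_n u^-\to v^-$ and $\gamma_n u^+\to q$, where $q\in\Lambda_\Gamma$ is chosen in the counterclockwise arc from $v^+$ to $v^-$; the accumulation assumption provides suitable $q$ arbitrarily close to $u^-$ inside $B^+$. Cyclic-order preservation then places $v^+$ inside $\gamma_n(B^+)$, so one can select $x_n=(h^{s_n}\tilde u)^+\in B^+$ (i.e.\ $s_n\ge 0$) with $\gamma_n x_n\to v^+$. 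The horospherical-ness of $u^-$ supplies the horocyclic base-point convergence needed to upgrade the boundary convergence to $\gamma_n h^{s_n}\tilde u\to\tilde v$ in $T^1\D$.

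\emph{Main obstacle.} The crux is the rigorous north--south dynamics analysis in necessity, locating the repelling limit $r$ of $\gamma_n$ in the excluded portion of $\overline{B^+}$ and ruling it out via the gap $I$; and in sufficiency, the simultaneous control of endpoint convergence, cyclic order, and horocyclic base-point height. Both rest on a careful combination of the discrete $\Gamma$-action on $S^1$ with the Patterson--Sullivan/Busemann structure at the horospherical point $u^-$.
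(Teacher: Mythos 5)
Your sufficiency argument, as written, does not use geometric finiteness anywhere: topological transitivity of $\Gamma$ on $\Lambda_\Gamma^{(2)}$ holds for any nonelementary group, and the accumulation condition ``$\Lambda_\Gamma$ accumulates on $u^-$ from the $B^+$ side'' is automatic whenever $\Lambda_\Gamma=S^1$. So if your argument were correct, it would show that on \emph{every} nonelementary surface of the first kind, each horospherical vector $u$ has $(h^su)_{s\ge0}$ dense. But the paper explicitly lists exactly this as an open question at the end of Section~\ref{Proof-of-density}; the theorem is stated for geometrically finite surfaces precisely because the implication is only known there. This tells us there must be a gap. The gap is in the last sentence of your sufficiency step: ``the horospherical-ness of $u^-$ supplies the horocyclic base-point convergence needed.'' Horosphericity gives orbit points $\gamma.o$ in every horoball at $u^-$, but to convert boundary convergence into convergence in $T^1\D$ along the \emph{positive} half-horocycle you need orbit points in the \emph{right} half of each horoball, outside a cone around the geodesic ray -- this is exactly the paper's notion of \emph{right horocyclic vector}. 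That this quantitative strengthening follows from ``not the first endpoint of a gap'' is Lemma~\ref{right-horospherical-gf}, whose proof crucially uses the decomposition of a geometrically finite surface into compact part, cusps and funnels (a connected set avoiding the compact part must sit in a cusp or funnel). Your direct boundary-dynamics argument silently assumes this and therefore overreaches. There is also an issue earlier: transitivity of $\Gamma$ on $\Lambda_\Gamma^{(2)}$ guarantees a dense orbit, not that the specific orbit of $(u^-,u^+)$ (which need not even lie in $\Lambda_\Gamma^{(2)}$) accumulates where you want; and picking $\gamma_n$ to control boundary behaviour fights against the simultaneous control of the Busemann/base-point parameter, which is exactly why the problem is hard.

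The paper's route is structurally different and worth comparing. It first characterizes dense positive half-horocycles on \emph{any} nonelementary surface by the right-horocyclic property (Proposition~\ref{right_iff_dense}); the proof does not run through boundary dynamics but through the geodesic flow: one shows a periodic vector $p$ with $W^{su}_+(p)$ dense lies in $\overline{W^{su}_+(u)}$, and the density of $W^{su}_+(p)$ for periodic $p$ (Proposition~\ref{periodic}) is a Coudène-style shadowing argument using the local product structure and nonarithmeticity of the length spectrum to move along the periodic orbit; Fact~\ref{open} then finishes. Only then does geometric finiteness enter, to translate ``right horocyclic'' into the clean boundary condition (Lemma~\ref{right-horospherical-gf}). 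Your necessity direction has the right intuition -- after large $s$ the positive half-horocycle's forward endpoints enter the gap, and indeed the half-orbit escapes into the funnel and out of every compact set -- but the north--south argument you sketch is not the cleanest phrasing and, more importantly, the sufficiency direction as proposed is not salvageable without essentially rebuilding the paper's machinery.
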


On general hyperbolic surfaces, this theorem remains valid 
for vectors $u\in T^1 S$ that are periodic for the geodesic flow 
(see proposition \ref{periodic}). 

In fact, inspsired by ideas of \cite{C}, we introduce the notion of {\em
  right horospherical vector}, and prove (proposition \ref{right_iff_dense}) 
that on general hyperbolic surfaces, a positive half-horocycle 
 $(h^su)_{s\ge 0}$ is dense iff
  $u$ is a right horospherical vector. 
  We deduce Theorem \ref{density} from the fact that on geometrically finite surfaces,
 right horospherical vectors are easy to characterize. \\

Our initial motivation was the study of equidistribution properties of horocycles. 
Furstenberg's unique ergodicity result \cite{F} for  the horocyclic flow ensures 
that on the unit tangent bundle $T^1 S$ of a compact hyperbolic surface, 
all horocyclic orbits are equidistributed towards the unique $(h^s)$-invariant
 measure $\lambda$: for all $u\in T^1 S$ and $f:T^1 S\to \R$ continuous,  
$\displaystyle \frac{1}{T}\int_0^Tf\circ h^s u\,ds\to \int_{T^1
  S}f\,d\lambda$, 
where $\lambda$ is the Liouville measure. 
Of course, the same result holds for $(h^{s}u)_{s\le 0}$. 

This result was extended by Dani and Smillie \cite{DS} to finite volume
hyperbolic surfaces: 
all nonperiodic one-sided orbits $(h^su)_{s\ge 0}$ are equidistributed towards $\lambda$.

On geometrically finite hyperbolic surfaces, there is \cite{Roblin} \cite{Burger} 
a unique $(h^s)$-invariant ergodic measure $m$ 
that has full support in the nonwandering set of $(h^s)$; and it is infinite. 
Therefore, as in Hopf ergodic theorem, one  considers ratios 
$\displaystyle\frac{\int_{-T}^T f\circ h^s u\,ds}{\int_{-T}^T g\circ h^s u\,ds}$ 
and one can prove \cite{Burger}\cite{Scha2} that 
they converge to  $\displaystyle\frac{\int_{T^1 S}f\,dm}{\int_{T^1 S}g\,dm}$ 
for all continuous functions $f, g:T^1 S\to \R$ with compact support, 
and all nonwandering and non periodic vectors $u\in T^1 S$. 
In these two articles, equidistribution is obtained for symmetric 
horocycles $(h^s u)_{-T\le s\le T}$ only, 
and not for one-sided horocycles $(h^s u)_{0\le s\le T}$.  
Symmetric averages are very natural from a geometric point of view, but not from the ergodic point of view, 
where a difference of behaviour between the negative and the positive orbit is an interesting phenomenon. 

In theorem \ref{density}, we characterized dense horocycles that 
have one side dense and the other not. 
 For these horocycles, one cannot hope equidistribution of both one-sided orbits. 
However, according to Hopf ergodic theorem, almost all one-sided horocycles
 should be equidistributed towards $m$. 

On geometrically finite hyperbolic surfaces, 
 the above phenomenon of dense horocycles with a nondense side 
is the only obstruction to the equidistribution of one-sided horocycles. 
Indeed, with  methods of \cite{Scha1} and \cite{Scha2}, we get:  

\begin{thm}\label{equidistribution} Let $S$ be a geometrically finite surface, 
and $u\in T^1 S$ 
such that $(h^su)_{s\ge 0}$ is dense in the nonwandering set of the geodesic flow. 
Then $(h^su)_{s\ge 0}$ is equidistributed towards the unique invariant measure $m$ 
which has full support in the nonwandering set of $(h^s)_{s\in\R}$. 

In other words, for all continuous functions with compact support $f,g:T^1 S\to \R$, 
with $\int_{T^1 S} g\,dm>0$, we have
$$
\frac{\int_0^T f\circ h^s u\,ds }{\int_0^T g\circ h^s u\,ds} \rightarrow
\frac{\int_{T^1 S} f\,dm}{\int_{T^1 S} g\,dm}\,,\quad\mbox{when }\quad T\to
+\infty\,.
$$
\end{thm}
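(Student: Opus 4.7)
The plan is to extend the strategy of \cite{Scha1, Scha2} — which handles symmetric averages $(h^s u)_{-T \le s \le T}$ — to one-sided averages. The central tool is the commutation relation between the geodesic and unstable horocyclic flows, $g^t h^\sigma g^{-t} = h^{\sigma e^t}$. Setting $t = \log T$ rewrites the one-sided average as
$$\int_0^T f(h^s u)\,ds \;=\; T \int_0^1 f\bigl(g^t(h^\sigma(g^{-t} u))\bigr)\,d\sigma,$$
and similarly for $g$. The asymptotics of the horocyclic averages as $T\to +\infty$ are thus controlled by the distribution, as $t\to +\infty$, of the push-forward by $g^t$ of a short positive horocyclic arc through $g^{-t} u$.

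The second step is to establish a one-sided analogue of Roblin's equidistribution theorem \cite{Roblin} for horocycles on geometrically finite surfaces. For a symmetric arc $\sigma\in[-1,1]$, \cite{Scha2} shows that the push-forward by $g^t$ of the Patterson-Sullivan-Margulis measure on this arc converges, after an appropriate normalization, weakly on $\Omega$ to the horocycle-invariant measure $m$. I would repeat this analysis — including the cusp decomposition along the backward geodesic orbit $(g^{-t}u)_{t\ge 0}$ — but restricted to $\sigma\in[0,1]$. The same normalizing factor appears in the numerator and the denominator, and taking the ratio eliminates it, leaving $\int f\,dm / \int g\,dm$, which is finite and non-zero since $\int g\,dm>0$.

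The hard part is the one-sided equidistribution of step two. If $u^-$ were the first endpoint of a gap in $S^1\setminus\Lambda_\Gamma$, then for every $t$ the positive half-horocycle through $g^{-t} u$ would have its boundary trace avoiding a fixed interval on one side of $u^-$; its image under $g^t$ would then concentrate away from a definite open subset of $\Omega$ rather than equidistribute. The hypothesis of the theorem — equivalent, by Theorem \ref{density}, to $u^-$ \emph{not} being the first endpoint of a gap — is precisely what rules out this obstruction and guarantees that the positive half-horocycle through each $g^{-t} u$ meets every open subset of $\Omega$. Combined with the mixing of $(g^t)$ with respect to the Bowen-Margulis-Sullivan measure and the uniform control of Patterson-Sullivan mass during cusp excursions used in \cite{Scha2}, this yields the desired one-sided Roblin-type equidistribution and hence Theorem \ref{equidistribution}.
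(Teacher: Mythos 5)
Your high-level plan---reduce the one-sided time average to a one-sided version of Roblin's push-forward equidistribution, and control cusp excursions---is pointing in the right direction, but the proposal skips exactly the part where the new one-sided work has to be done, and it conflates two measures that play distinct roles.

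First, the target measure of the Roblin-type step is wrong. The normalized push-forward by $g^t$ of the Patterson--Sullivan measure on a horocyclic arc equidistributes towards the \emph{finite} Bowen--Margulis measure $m^{ps}$, not towards the infinite horocycle-invariant measure $m$. These two measures share the same transverse structure but carry different measures along the leaves ($\mu^{ps}_{H^+}$ versus $ds$). So even granting your step two, what falls out after cancelling normalizations is $\int f\,dm^{ps}/\int g\,dm^{ps}$, not $\int f\,dm/\int g\,dm$. Passing from the $\mu^{ps}_{H^+}$-weighted averages $M^+_{r,u}$ (which equidistribute to $m^{ps}$) to the Lebesgue-weighted ratios of the theorem requires a separate flow-box comparison of transverse measures; this step is in the paper (end of section~4) and is not trivial.

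Second, and more seriously, the tightness estimate is missing. The heart of the paper's argument is Theorem~\ref{nondivergence}: the $\mu^{ps}_{H^+}$-normalized averages over positive arcs $(h^su)_0^r$ do not lose mass to the cusps as $r\to\infty$. This is established through the one-sided half-shadow estimate of Proposition~\ref{half_shadow} (a genuine refinement of \cite{Scha2}: one has to control $\nu_o(V(o,\xi_i,t)^+)$ rather than the full shadow), Lemma~\ref{uniformbound}, and the observation that for a \emph{one-sided} arc, all but at most one cusp excursion is crossed completely, leaving only a single boundary term $i_0(r)$ to handle separately. Your proposal says ``uniform control of Patterson--Sullivan mass during cusp excursions used in \cite{Scha2}'' as if it applied verbatim, but the one-sided adaptation of this control is precisely the novel technical content. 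Without it, the limit measures of your normalized push-forwards could be sub-probability measures, and the whole argument breaks.

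Third, the mechanism by which density enters is not mixing of $(g^t)$. The paper uses the Burger--Roblin classification of $(h^s)$-invariant (transverse) measures: once tightness forces limit points of $M^+_{r,u}$ to be probabilities on $\Omega$ giving zero mass to periodic horocycles, uniqueness of the fully supported transverse invariant measure identifies the limit. The density hypothesis is used there to rule out the limit measures charging periodic horocycles and to ensure $\mu^{ps}_{H^+}((h^su)_{0\le s\le R})\to\infty$. Your heuristic that density of $(h^su)_{s\ge 0}$ ``guarantees the positive half-horocycle meets every open set'' is true but does not by itself give equidistribution; plenty of dense orbits fail to equidistribute in the absence of tightness and the measure classification.
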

Note that periodic orbits are obviously equidistributed to the Lebesgue measure on the orbit. 
Of course, theorem \ref{equidistribution} also holds for negative orbits $(h^su)_{s\le 0}$.

Most results extend to  surfaces of variable negative curvature.
 However, to avoid too many preliminaries, we postpone 
the discussion on such surfaces to the end of the paper. 

Section  \ref{Preliminaries} is devoted to preliminaries. 
Theorem \ref{density} is proved in section \ref{Proof-of-density}, where we also discuss the case of geometrically infinite surfaces,  
and theorem \ref{equidistribution} in section \ref{Proof-of-equidistribution}. 
%In the last section, we extend our results to the variable negative curvature case. 

I thank warmly O. Sarig for the question at the origine of this note,  
F. Dal'bo for her comments on the first version of this work, 
Yves Coudene for several helpful discussions, and all
members of the former french ANR project on infinite ergodic theory for our fruitful work together.

%%%%%%%%%%%%%%%%%%%%%%%%%%%%%%%%%%%%%%%%%%%%%%%%%%%%%%%%%%%%

\section{Preliminaries}\label{Preliminaries}

%%%%%%%%%%%%%%%%%%%%%%%%%%%%%%%%%%%

\subsection*{Hyperbolic geometry}
The hyperbolic  disc $\D=D(0,1)$ is endowed with the metric 
$\frac{1}{4}\frac{dx^2}{(1-|x|^2)^2}$.
Let $o$ be the origin of the disc. Denote by $\pi:T^1\D\to \D$ the canonical projection. 
The {\em boundary at infinity}  is
$S^1=\partial\D$.

The map $z\in\D\mapsto  \frac{i(1+z)}{1-z}$ is an isometry between $\D$ 
with the above metric and the upper half plane $\H=\R\times \R_+^*$ 
endowed with the hyperbolic metric 
$\frac{dx^2+dy^2}{y}$. 
Therefore, the group  of isometries preserving orientation of $\D$ 
identifies with $PSL(2,\R)$ acting
by homographies on $\H=\R\times \R_+^*$. 
It acts simply transitively on the unit tangent bundle $T^1\D$, 
so that we identify these two spaces through the  map which sends the unit
vector $(1,0)$ tangent to $\D$ at $o=(0,0)$ on the identity element of $PSL(2,\R)$.

The  {\em Busemann cocycle } is the continuous map defined on $S^1\times\D^2$ by
$$
\beta_{\xi}(x,y):=\lim_{z\to\xi}\left(d(x,z)-d(y,z)\right)\,.
$$
Define the map
$\displaystyle
v\in T^1\D\mapsto (v^-,v^+,\beta_{v^-}(\pi(v),o))\,,$
where $v^{\pm}$  are the endpoints in 
$S^1$ of the geodesic defined by $v$, and  $\pi(v)\in\D$ is the basepoint in $S$ of  $v$. 
It defines a homeomorphism between $T^1\D$ and 
 $\partial^2\D\times\R:=S^1\times S^1\setminus\mbox{Diagonal}\times\R$.

Let $\Gamma$ be a discrete subgroup of  $PSL(2,\R)$. 
Its {\em limit set} is  
$\Lambda_{\Gamma}=\overline{\Gamma o}\setminus \Gamma o \subset S^1$. The
group $\Gamma$ acts properly discontinuously on the {\em
  ordinary set} $S^1\setminus\Lambda_{\Gamma}$, which is a countable union of
intervals.

A point $\xi\in\Lambda_{\Gamma}$ is a  {\em radial} limit point 
 if it is the  limit of a sequence
$(\gamma_n .o)$ of points of $\Gamma . o$ that stay at bounded distance 
of the geodesic ray $[o\xi)$ joining $o$ 
to $\xi$.
Let $\Lambda_{\rm rad}$ denote the {\em radial limit set}.

The point $\xi\in\Lambda_{\Gamma}$ is {\em horospherical} if any horoball
centered at $\xi$ contains infinitely many points of $\Gamma.o$. In
particular, $\Lambda_{\rm rad}$ is included in the horospherical set
$\Lambda_{\rm hor}$.

An isometry of $PSL(2,\R)$ is  {\em hyperbolic} if it fixes exactly two points of  
 $S^1$, it is
 {\em parabolic} if it fixes exactly one point of $S^1$, and {\em elliptic} in the other cases.
Let  $\Lambda_{\rm p}\subset \Lambda_{\Gamma}$ denote the set of {\em parabolic} 
limit points, that is the points of $\Lambda_{\Gamma}$ fixed by a parabolic isometry of
 $\Gamma$.

Any hyperbolic surface is the quotient  $S=\Gamma\backslash \D$ of $\D$ by a
 discrete subgroup $\Gamma$ of $PSL(2,\R)$ without elliptic element. 
Its unit tangent bundle  $T^1S$ identifies with $\Gamma\backslash PSL(2,\R)$.

In this note, we always assume $\Gamma$ be {\em nonelementary},
 that is $\#\Lambda_\Gamma=+\infty$. 
Moreover, we are mainly interested in {\em geometrically finite surfaces} $S$,
 i.e. surfaces whose fundamental group  $\Gamma$ is finitely generated. 
In such cases, the limit set $\Lambda_{\Gamma}$ is the disjoint union of $\Lambda_{\rm rad}$ 
and $\Lambda_{\rm p}$  \cite{Bowditch}.
Moreover, the surface is a disjoint union of a compact part $C_0$, 
finitely many cusps (isometric to $\{z\in\H,\,\, Im\, z\ge cst\}/\{z\mapsto z+1\}$), 
and finitely many 'funnels' 
(isometric to $\{z\in\H,\,\, Re(z)\ge 0,\, 1\le |z|\le a\}/\{z\mapsto az\}=
\{z\in\H,\,\, Re(z)\ge 0\}/\{z\mapsto az\}$, for some $a>1$. 

When $S$ is compact, $\Lambda_{\Gamma}=\Lambda_{\rm rad}=S^1$. 
It is said {\em convex-cocompact}  when it is a geometrically finite surface without cusps. 
In this case, $\Lambda_{\Gamma}=\Lambda_{rad}$ is strictly included in $S^1$
 and $\Gamma$ acts cocompactly 
on  the set 
$(\Lambda_{\Gamma}\times \Lambda_\Gamma)\setminus \mbox{Diagonal}\times\R\subset T^1\D$. 
(We identify now the two homeomorphic spaces $T^1\D$ and 
$S^1\times S^1\setminus\mbox{Diagonal}\times \R$.)
When  $S$  has finite volume, there are no funnels and $\Lambda_{\Gamma}=S^1$. 

\subsection*{Geodesic and horocycle flows}

A hyperbolic geodesic in $\D$ is a diameter or a half-circle orthogonal to $S^1$. 
A {\em horocycle} of $\D$ is a circle tangent to $S^1$.
 It can also be defined as a level set of a Busemann function.  
A {\em horoball} is the (euclidean) disc delimited by a horocycle. 
A vector $v\in T^1\D$ is tangent to a unique geodesic of $\D$. 
Moreover, it is orthogonal to exactly two horocycles passing through its basepoint $\pi(v)$, 
and tangent to $S^1$ respectively at $v^+$ and $v^-$. 
The set of vectors $w\in T^1 \D$ such that $w^-=v^-$ and 
based on the same horocycle tangent to $S^1$ at $v^-$ 
is the {\em strong unstable horocycle} or strong unstable manifold 
$W^{su}(v)\subset T^1 \D$ of $v$
The  {\em strong stable manifold} $W^{ss}(v)$ is defined in the same way. 

The {\em geodesic flow}  $(g^t)_{t\in\R}$  acts on $T^1\D$ by moving
 a vector $v$ of a distance $t$ along its geodesic. 
In the identification of
 $T^1\D$ with $PSL(2,\R)$, this flow corresponds to the right action by
 the one-parameter subgroup 
 $$
 \left\{a_t:=\left(\begin{array}{cc}
e^{t/2} & 0 \\
0 & e^{-t/2} \\
\end{array} \right),\,t\in\R
\right\}.
$$

The {\em strong unstable horocyclic flow}  $(h^s)_{s\in\R}$ acts on $T^1\D$ by moving a vector
  $v$ of a distance $|s|$ 
along its strong unstable horocycle. There are two possible orientations for this
  flow, 
and we consider the choice corresponding to the right action 
by the one parameter subgroup
 $$ 
\left\{n_s:=\left(\begin{array}{cc}
 1& 0 \\
s & 1 \\
\end{array} \right),\,s\in\R
\right\}$$
on $PSL(2,\R)$. This flow turns vectors along their strong unstable horocycle,
 so that $\{h^s v, \,s\in\R\}=W^{su}(v)$. 
The horocyclic orbits are the {\em strong unstable manifolds} of the geodesic 
flow in the sense that 
$$
W^{su}(v)=\{w\in T^1\D,\, d(g^{-t}v, g^{-t}w)\to 0 \mbox{ quand } t\to +\infty\}\,.
$$
Moreover, it satisfies
$$ g^t\circ h^s=h^{se^t}\circ g^t\,.$$

These two right-actions are well defined on the quotient space 
$T^1 S\simeq \Gamma\backslash PSL(2,\R)$. 
The nonwandering set $\Omega$ of the geodesic flow is the set 
$\Gamma\backslash (\Lambda_{\Gamma}^2\times  \R)$.
The horocyclic flow is topologically transitive (see \cite{Dalbo}) in the sense that
 there exists $u\in T^1 S$ such that 
$\overline{(h^s u)_{s\in\R}}\supset \Omega$. It allows to see that
the nonwandering set ${\mathcal E}$ of the horocyclic flow is the set 
$\Gamma\backslash (\Lambda_{\Gamma}\times S^1\times \R)$ of vectors 
such that $v^-\in \Lambda_{\Gamma}$. 

In our situation (nonelementary hyperbolic surfaces)
 we know that the length spectrum of the fundamental group
 $\Gamma$ of $S$ is nonarithmetic, that is the set $\{l(\gamma)\}$
 of lengths of closed geodesics generates a dense subgroup of $\R$.
 We will use this crucial fact in the sequel.

\subsection*{Local product structure of the geodesic flow}

The geodesic flow on the unit tangent bundle of any 
hyperbolic surface (including $\D$)
 is a {\em hyperbolic flow}. In particular,
 it has a (uniform) {\em local product structure}\,:
{\em for all $\varepsilon>0$, there exists $\delta>0$ s.t. if $d(u,v)\le \delta$, 
there is a vector $w=[u,v]$ in $W^{su}_\varepsilon(g^t u)\cap W^{ss}_\varepsilon(v)$, 
where $W^{ss}_\varepsilon(v)$ is the intersection of the strong stable horocycle of $v$ 
with the ball centered at $v$ of radius $\varepsilon$ and $|t|\le \varepsilon$. }

\begin{figure}[ht!]
\begin{center}\label{produitlocal}
\input{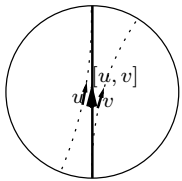}
\caption{Local product in the hyperbolic disc $\D$}
\end{center}
\end{figure}

%%%%%%%%%%%%%%%%%%%%%%%%%%%%%%%%%%%%%%%%%%%%%%%%%%%%%%%%%%%%%%%%%%%%%%%%%%

\section{Density of positive half-horocycles}\label{Proof-of-density}

Recall that $W^{su}(v)=\{h^s v,\,s\in\R\}$ is compact iff $v^-\in \Lambda_p$,
and dense in $\Omega$ iff $v^-\in \Lambda_{hor}$ (see \cite{Dalbo}). 
Denote by $W^{su}_+(v)=\{h^s v,\,s\ge 0\}$ the positive half-horocycle. 

We suppose in the sequel that $S^1$ is oriented in the counterclockwise
direction. 

\subsection*{Geometry of funnels}

\begin{rem}\rm If the surface $S=\D/\Gamma$ has a funnel isometric 
to $\{z\in\H, \,Re(z)\ge 0\}/\{z\mapsto az\}$, with $a>1$,
 the geodesic line $Re(z)=0$ 
of $\D$ induces on the quotient the closed geodesic closing the funnel. 
Any geodesic line crossing this closed geodesic and entering into the funnel 
never returns back to the other side. 
In particular, the limit set $\Lambda_\Gamma$ does not 
intersect the right half line $\R_+^*$. 
\end{rem}

From this elementary remark, we deduce the following key facts.

\begin{fact} \rm On a geometrically finite hyperbolic manifold, the only
  points on the boundary of an interval of $S^1\setminus \Lambda_\Gamma$ are
  hyperbolic. More precisely, both extremities of such an interval are the
  endpoints $p^\pm$ of the axis of a lift of the closed geodesic bording the
  corresponding funnel. 
\end{fact}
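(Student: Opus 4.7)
The plan is to argue in two steps, using the structure theorem for geometrically finite surfaces recalled in Section~\ref{Preliminaries}.

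First, I would rule out that an endpoint $\xi$ of a component $I\subset S^1\setminus\Lambda_\Gamma$ is parabolic. Working in the upper half-plane model with $\xi=\infty$, a parabolic element of $\Gamma$ fixing $\xi$ has the form $z\mapsto z+t$ for some $t\neq 0$, so the orbit of any other limit point $\eta\in\Lambda_\Gamma\setminus\{\xi\}$ is $\{\eta+nt:n\in\Z\}\subset\Lambda_\Gamma$, which is unbounded in both directions of $\R$ and therefore accumulates on $\infty$ from both sides. Hence no component of $S^1\setminus\Lambda_\Gamma$ can have $\xi$ as an endpoint, and by the decomposition $\Lambda_\Gamma=\Lambda_{\rm rad}\sqcup\Lambda_{\rm p}$ both endpoints $\xi_\pm$ of $I$ automatically lie in $\Lambda_{\rm rad}$.

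Second, I would identify $\xi_\pm$ as the endpoints of the axis of a hyperbolic element of $\Gamma$. Consider the stabilizer $\Gamma_I\subset\Gamma$ of $I$. Since $PSL(2,\R)$ preserves the orientation of $S^1$, $\Gamma_I$ fixes each of $\xi_-,\xi_+$ individually, so it sits inside the one-parameter subgroup of $PSL(2,\R)$ fixing this pair of points and, being discrete, is either trivial or cyclic generated by a hyperbolic element with axis $\xi_-\xi_+$. To exclude the trivial case I would invoke the end structure coming from geometric finiteness: the open half-disc of $\D$ bounded by $I$ and by the geodesic joining $\xi_-$ to $\xi_+$ projects to a single end of $S$; on a geometrically finite surface every end is a cusp or a funnel, and cusps are already excluded by Step~1, so this end is a funnel. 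Placing it in the model of the preceding remark as $\{\re z\ge 0\}/\{z\mapsto az\}$ with $a>1$, the interval $I$ becomes $(0,\infty)$ and the generator $z\mapsto az$ of $\Gamma_I$ is a hyperbolic element whose axis $\{\re z=0\}$ has endpoints $\{0,\infty\}=\{\xi_-,\xi_+\}$ and projects onto the closed geodesic bordering the funnel.

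The main obstacle is the identification, in Step~2, of the end of $S$ cut out by $I$ with one of the funnels of the finite decomposition of $S$: this is where geometric finiteness is used most essentially, since without it one could have an end that is neither a cusp nor a funnel, and no hyperbolic element closing $I$ at all. Once this identification is made, the funnel model from the preceding remark yields the axis picture at once, so the heart of the argument lies in the correct invocation of the structure theorem rather than in any direct computation.
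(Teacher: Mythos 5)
Your argument is correct and is essentially the route the paper leaves implicit: the Fact is stated right after the remark on funnels precisely because one is meant to combine the compact--cusps--funnels decomposition of a geometrically finite surface with the funnel model $\{\re z\ge 0\}/\{z\mapsto az\}$, and your two steps (eliminate parabolic endpoints by the two-sided accumulation of a parabolic orbit, then use the structure theorem to match the component $I$ with a funnel whose bounding closed geodesic lifts to the axis with endpoints $\partial I$) spell this out. One small remark: the exclusion of the cusp case in Step~2 does not really need Step~1 --- a cusp end corresponds to a single parabolic point at infinity, not to a nondegenerate interval of $S^1\setminus\Lambda_\Gamma$, so the half-disc bounded by $I$ and the geodesic $(\xi_-\xi_+)$ can only project onto a funnel; Step~1 is then a consequence rather than a prerequisite, though it is nice to have as an independent check.
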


\begin{fact}\rm Assume $S$ be geometrically finite. 
 If $v^- \in \Lambda_{hor}$ is the first endpoint of an interval of
  $S^1\setminus \Lambda_{\Gamma}$, then $W^{su}_+(v)$ is not dense in $\Omega$
  and $(g^{-t}v)_{t\ge 0}$ is asymptotic to the closed geodesic turning around
  a funnel. 
\end{fact}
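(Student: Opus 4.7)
My plan is to deduce both assertions from the previous Fact, which identifies $v^-=p^-$ as an endpoint of the axis $\tilde c$ of a hyperbolic $\gamma_0\in\Gamma$ whose projection in $S$ is the closed geodesic $c$ bounding the funnel associated with $I$.

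For the asymptotic property, I would invoke the classical fact from hyperbolic geometry that two geodesic lines of $\D$ sharing an ideal endpoint are asymptotic: the hyperbolic distance between them tends to $0$ as one moves toward the shared endpoint. The backward ray $(g^{-t}v)_{t\ge 0}$ in $\D$ converges to $v^-=p^-$, which is also an endpoint of $\tilde c$, so $\dist(\pi(g^{-t}v),\tilde c)\to 0$ in $\D$. Projecting to $T^1S$, $(g^{-t}v)_{t\ge 0}$ is then asymptotic to $c$.

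For the non-density, the paper's orientation conventions on $S^1$ and on the horocyclic flow are arranged so that the hypothesis ``$v^-$ is the first endpoint of $I$ counterclockwise'' precisely means that $(h^sv)^+$ approaches $v^-$ from within $I$ as $s\to+\infty$. In particular there exists $s_0\ge 0$ such that $(h^sv)^+\in I\subset S^1\setminus\Lambda_\Gamma$ for every $s\ge s_0$. For such $s$, the basepoint of $h^sv$ in $\D$ lies on the horocycle at $p^-$ through $\pi(v)$, on the funnel side of $\tilde c$, and tends to $p^-$ as $s\to+\infty$. Passing to the quotient $T^1S$, the basepoint then lies in the funnel bounded by $c$ and travels along the projected horocyclic arc toward the infinity of that funnel, escaping every compact subset of $T^1S$. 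Consequently the projection of $h^sv$ to $T^1S$ has no accumulation point as $s\to+\infty$, and the closure of $W^{su}_+(v)$ in $T^1S$ is the $1$-dimensional curve $W^{su}_+(v)$ itself together with its compact initial arc; this cannot cover $\Omega$ (it will for example miss closed geodesic orbits disjoint from this curve).

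The main delicate points are (i) the alignment of the orientation convention with the sign of $s$ in the horocyclic flow---the ``first endpoint counterclockwise'' definition is precisely designed to make the implication $(h^sv)^+\in I$ for large $s$ hold---and (ii) verifying that the basepoint of $h^sv$ indeed escapes every compact subset of $T^1S$, which requires ruling out recurrence to the convex core via the $\Gamma$-action. Both can be checked in the half-plane model with $p^-=0$, $p^+=\infty$, and $\gamma_0(z)=a^2z$, where the basepoint of $h^sv$ is $(s/(1+s^2),\,1/(1+s^2))$ and, under any $\gamma_0^k$-translate into a fundamental annulus, the imaginary part tends to $0$, confirming the escape to the funnel's infinity.
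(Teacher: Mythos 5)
Your proof is correct and follows the same geometric route the paper has in mind: the preceding Remark about funnel geometry plus the first Fact (identifying $v^-$ with the endpoint $p^-$ of the axis of the hyperbolic element bounding the funnel) make both claims visible, and you have filled in the details that the paper leaves implicit since it states this only as a ``Fact'' without a written proof. In particular, you correctly identify the two points that actually need checking — the compatibility between the counterclockwise orientation of $S^1$ and the right-action by $n_s$ (so that $(h^sv)^+$ does enter the gap interval $I$ for $s$ large), and the fact that once $(h^sv)^+\in I$ the basepoint of $h^sv$ lies on the funnel side of the axis $\tilde c$ with distance to $\tilde c$ tending to infinity, hence escapes every compact set of $S$, ruling out hidden recurrence that could enlarge the closure of $W^{su}_+(v)$. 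The half-plane computation with $p^-=0$, $p^+=\infty$, $\gamma_0(z)=a^2z$ and basepoint $\frac{s+i}{1+s^2}$ verifies both. Nothing is missing.
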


\begin{figure}[ht!]
\begin{center}\label{horocyclenondense}
\input{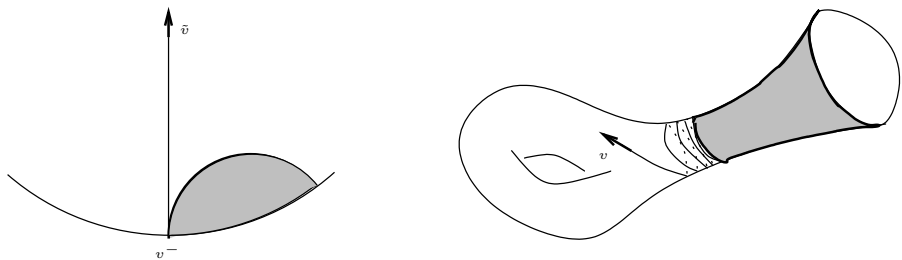}
\caption{A vector whose right horocycle is not dense in $\Omega$}
\end{center}
\end{figure}

%%%%%%%%%%%%%%%%%%%%%%%%%%%%%%%%%%%%%%

\subsection*{Right horocyclic vectors and right horocyclic points}

If $v\in T^1\D$, we denote by $Hor(v)\subset \D$ the horoball centered at $v^-$ 
and containing  the base point of $v$ in its boundary. 
We denote by $Hor^+(v)\subset Hor(v)$ the ``right part'' of the horoball, 
i.e. the set of basepoints of vectors of $\cup_{t\ge 0}W^{su}_+(g^{-t}v)$. 

Fix a point $o\in \D$. 
If $S$ is a geometrically finite surface, we assume that $o$ 
belongs to a lift of the compact part of $S$.

In \cite{C}, a vector $v\in T^1S$ is called {\em horospherical} if there exists
$z\in\Omega$, $t_i\to +\infty$ and 
$v_i\in W^{su}(v)\cap\Omega$ s.t. $g^{-t_i}v_i\to z\in\Omega$. 
It
is equivalent to saying that $v^-\in\Lambda_{hor}$, 
that is that all horoballs centered at $v^-$ contain infinitely many points 
of the orbit $\Gamma.o$ (see  lemma \ref{right-horocyclic} below for a proof).

\begin{defn} If $v\in T^1\D$, and $\alpha>0$, we define the {\em cone} 
of width $\alpha$ around $v$ as the set 
$\mathcal{C}(v,\alpha)$ of points at distance at most $\alpha$ from the 
geodesic ray $(g^{-t}v)_{t\ge 0}$ inside the horoball $Hor(v)$.  
\end{defn}

\begin{defn} Let $S$ be a nonelementary hyperbolic surface. 
A vector $v\in T^1 S$ is a {\em right horocyclic vector} 
if for a lift $\tilde{v}\in T^1 S$, for all $\alpha>0$ and $D>0$, 
the orbit $\Gamma.o$ intersects 
the right horoball $Hor^+(g^{-D}\tilde{v})$ minus the cone $C(g^{-D}\tilde{v}, \alpha)$. 
\end{defn}

\begin{figure}[ht!]
\begin{center}\label{horocyclenondenseright-horocyclic}
\input{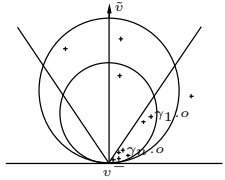}
\caption{Lift of a right-horocyclic vector}
\end{center}
\end{figure}

Of course if $v$ is a right horocyclic vector, then $v^-$ is horospherical,
 and equivalently $v$ is a horospherical vector in the sense of \cite{C}.

\begin{rem}\rm This definition depends only of $v^-$ (indeed, if $w$ is another vector 
with $v^-=w^-$, any cone around w is included in a cone around $v$). 
A point $\xi\in \Lambda_\Gamma$ which is the negative endpoint of a right 
horocyclic vector will therefore be called a {\em right horocyclic point}. 
\end{rem}

\begin{lem}\label{right-horocyclic} Let $S$ be a non elementary hyperbolic surface. 
A vector   $v\in T^1 S$ is a right horocyclic vector 
if and only if there exists $z\in\Omega$ such that 
 for all $\alpha$, there exists a sequence $t_n\to +\infty$,  $v_n\in W^{su}_+(v)$ 
 s.t.  $g^{-t_n}v_n$ converges to $z\in \Omega$, 
but  $g^{-t_n}\tilde{v}_n\notin \mathcal{C}(\tilde{v},\alpha)$, 
where $\tilde{v}$ and $\tilde{v_n}$ are lifts resp. 
of $v$ and $v_n$ on the same  horocycle of $T^1 \D$.
 \end{lem}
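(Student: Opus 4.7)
The plan is to establish a dictionary between orbit points $\gamma.o\in\Gamma.o$ sitting deep in $Hor^+(g^{-D}\tilde v)$ but off the cone $\mathcal C(g^{-D}\tilde v,\alpha)$, and vectors $\tilde v'\in W^{su}_+(\tilde v)$ whose backward geodesic orbit $g^{-t}\tilde v'$ passes close to $\gamma.o$. The condition $g^{-t_n}v_n\to z$ in $T^1S$ amounts, after unwinding by suitable $\gamma_n^{-1}\in\Gamma$, to saying that $\gamma_n^{-1}g^{-t_n}\tilde v_n$ stays in a bounded neighborhood of $o$.

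For the direction $(\Leftarrow)$, fix $\alpha,D>0$ and a lift $\tilde z$ of $z$, and set $C:=d(o,\pi(\tilde z))$. I apply the hypothesis with a cone parameter $\alpha'$ large enough (a function of $\alpha$ and $C$) to obtain $t_n\to\infty$ and $v_n\in W^{su}_+(v)$ with $g^{-t_n}\tilde v_n\notin\mathcal C(\tilde v,\alpha')$ and $g^{-t_n}v_n\to z$. The convergence yields $\gamma_n\in\Gamma$ with $\gamma_n^{-1}g^{-t_n}\tilde v_n\to\tilde z$, so $d(\gamma_n.o,\pi(g^{-t_n}\tilde v_n))\le C+1$ for $n$ large. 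A direct estimate in horoball coordinates, combined with this bounded displacement, places $\gamma_n.o$ in $Hor^+(g^{-D}\tilde v)\setminus\mathcal C(g^{-D}\tilde v,\alpha)$ for $n$ large: the positive-side condition on $\tilde v_n$ and $t_n\to\infty$ push the basepoint (and hence $\gamma_n.o$) to the right and deep, while the inflated $\alpha'$ compensates for the bounded shift away from the axis.

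For the direction $(\Rightarrow)$, fix $\alpha>0$ and choose $\gamma_n\in\Gamma$ with $\gamma_n.o\in Hor^+(g^{-D_n}\tilde v)\setminus\mathcal C(g^{-D_n}\tilde v,\alpha)$, $D_n\to\infty$. To arrange that the limit lies in $\Omega$, I first approximate each $\gamma_n.o$ by a point on a radial geodesic $[v^-,\xi_n]$ with $\xi_n\in\Lambda_\Gamma\setminus\{v^-\}$ so that the hyperbolic distance between them tends to zero (possible because $v^-$ is horospherical and $\gamma_n.o$ moves deep into the horoball). Define $\tilde v_n$ as the unique vector on the horocycle of $\tilde v$ with $\tilde v_n^-=v^-$ and $\tilde v_n^+=\xi_n$; the $Hor^+$-condition places it on the positive side. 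Choose $t_n$ so that $g^{-t_n}\tilde v_n$ sits at the depth of the approximating point. Then $\gamma_n^{-1}g^{-t_n}\tilde v_n$ stays in a compact neighborhood of $o$, and $v_n\in W^{su}_+(v)\cap\Omega$, so $g^{-t_n}v_n\in\Omega$; extracting a subsequence gives $g^{-t_n}v_n\to z_\alpha\in\Omega$, and the cone-avoidance $g^{-t_n}\tilde v_n\notin\mathcal C(\tilde v,\alpha)$ is preserved up to a vanishing error from the approximation.

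To produce a single $z$ valid for all $\alpha$, let $Z_\alpha\subset\Omega$ denote the set of all accumulation points $z_\alpha$ arising this way. Each $Z_\alpha$ is a nonempty closed subset of a common compact set (the image of $T^1_o\D$ in $T^1S$), and $Z_{\alpha'}\subset Z_\alpha$ for $\alpha'>\alpha$. The finite intersection property yields $\bigcap_\alpha Z_\alpha\ne\emptyset$, and any element of this intersection serves as the required $z$. The main obstacle I anticipate is the approximation step in $(\Rightarrow)$: finding a radial geodesic $[v^-,\xi_n]$ with $\xi_n\in\Lambda_\Gamma$ at hyperbolic distance $o(1)$ from $\gamma_n.o$. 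In the geometrically finite case this is elementary, since the gaps of $\Lambda_\Gamma$ have bounded width and $\gamma_n.o$ goes deep into the horoball; in the general hyperbolic setting one must argue more carefully, exploiting that $v^-\in\Lambda_{hor}$.
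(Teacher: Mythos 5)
Your $(\Leftarrow)$ direction matches the paper's proof in essence (inflate the cone parameter by the distance from $o$ to $\pi(z)$, use the convergence to place an orbit point near $\pi(g^{-t_n}\tilde v_n)$, then transfer the horoball and cone-avoidance estimates).

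The $(\Rightarrow)$ direction, however, has a genuine gap that you yourself flag as ``the main obstacle,'' and it is indeed not resolvable the way you hope. You want a point $\xi_n\in\Lambda_\Gamma\setminus\{v^-\}$ such that the geodesic $(v^-\xi_n)$ passes at hyperbolic distance $o(1)$ from $\gamma_n.o$. This is generally false: the gaps in $\Lambda_\Gamma$ do not shrink as you go deeper into a horoball centered at $v^-$, and ``$v^-$ horospherical'' only gives you infinitely many orbit points in each horoball, not proximity of geodesics from $v^-$ to orbit points. The paper sidesteps this entirely by proving a \emph{uniform-bound} statement (its Fact 3.8): there is $R>0$ such that for every $\xi\in\Lambda_\Gamma$ some geodesic $(\xi\eta)$ with $\eta\in\Lambda_\Gamma$ meets $B(o,R)$; this follows from compactness of $\Lambda_\Gamma$ and the fact that $\Lambda_\Gamma$ is perfect. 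Applying it to $\gamma_n^{-1}v^-$ and translating by $\gamma_n$ produces a geodesic from $v^-$ to $\Lambda_\Gamma$ passing within $R$ (not $o(1)$) of $\gamma_n.o$, and $R$ is all the argument ever needs --- you simply lose a bounded amount in the cone parameter, which is harmless because you may feed in $\alpha+R$ to begin with. If you replace your ``distance tends to zero'' claim by this uniform $R$, the rest of your construction of $\tilde v_n$, $t_n$ goes through.

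Finally, your closing finite-intersection argument to extract a single $z$ valid for all $\alpha$ is workable (the nesting $Z_{\alpha'}\subset Z_\alpha$ for $\alpha'>\alpha$ and closedness in the compact set $T^1B(o,R)\cap\Omega$ do hold), but it is heavier than necessary. The paper avoids it by running $\alpha_n\to\infty$ and $D_n\to\infty$ simultaneously from the start; the resulting sequence $w_n$ lives in the single compact set $T^1B(o,R)\cap\Omega$, so a subsequential limit $z$ automatically works for every fixed $\alpha$ (once $\alpha_n-R>\alpha$). You may want to adopt this shortcut.
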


The definition of right horocyclic vector is easier, but the above equivalent property 
will be more useful in the sequel. 

\begin{proof} Let us begin with the following elementary fact. 
\begin{fact}\label{uniformity} There exists $R>0$, such that for all $\xi\in\Lambda_\Gamma$, 
there exists $\eta\in\Lambda_\Gamma$, such that the geodesic $(\xi\eta)$ 
intersects the ball $B(o,R)$. 
\end{fact}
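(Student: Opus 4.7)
The plan is to recast the required radius $R$ as the maximum of a natural upper semicontinuous function on the compact set $\Lambda_\Gamma$. For $\xi\in\Lambda_\Gamma$ set
$$
f(\xi)\;:=\;\inf_{\eta\in\Lambda_\Gamma\setminus\{\xi\}} d\bigl(o,(\xi\eta)\bigr),
$$
the distance from $o$ to the closest geodesic joining $\xi$ to some other limit point. The Fact is equivalent to $\sup_{\xi\in\Lambda_\Gamma} f(\xi)<+\infty$, and any $R$ strictly larger than this supremum will work.

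Two verifications are needed. First, pointwise finiteness: since $\Gamma$ is nonelementary, $\Lambda_\Gamma$ is infinite, so any $\xi$ admits at least one $\eta\neq\xi$ in $\Lambda_\Gamma$, and the corresponding geodesic lies at finite distance from $o$. Second, upper semicontinuity of $f$. Fix $\xi_0\in\Lambda_\Gamma$; for any chosen $\eta_0\in\Lambda_\Gamma\setminus\{\xi_0\}$, the function $(\xi,\eta)\mapsto d\bigl(o,(\xi\eta)\bigr)$ is continuous on $\partial^2\D$, via the homeomorphism $T^1\D\simeq \partial^2\D\times\R$. Hence for any $\xi_n\to \xi_0$ in $\Lambda_\Gamma$ one has $\xi_n\neq\eta_0$ eventually, and
$$
\limsup_{n\to\infty} f(\xi_n)\;\le\;\lim_{n\to\infty} d\bigl(o,(\xi_n\eta_0)\bigr)\;=\;d\bigl(o,(\xi_0\eta_0)\bigr).
$$
Taking the infimum of the right-hand side over $\eta_0\in\Lambda_\Gamma\setminus\{\xi_0\}$ yields $\limsup_{n} f(\xi_n)\le f(\xi_0)$, which is upper semicontinuity.

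Compactness of $\Lambda_\Gamma\subset S^1$ together with upper semicontinuity then ensures that $f$ attains its maximum $M$, which is a finite real number by the pointwise finiteness above; any $R>M$ works. The only mildly delicate point is the upper semicontinuity step, but it reduces to pulling an infimum through a $\limsup$. Note that the position of $o$ plays no role in the existence of $R$, only in its size: the convention in the geometrically finite case that $o$ lies in (a lift of) the compact part of $S$ merely guarantees a usable value of $R$ for the applications to follow.
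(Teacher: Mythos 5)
Your proof is correct and follows essentially the same route as the paper's: both are compactness arguments resting on the continuity of $(\xi,\eta)\mapsto d(o,(\xi\eta))$ on $\partial^2\D$ and the nonelementary hypothesis. The paper simply runs the argument as a direct contradiction (assume $f$ unbounded along $\xi_n$, extract a convergent subsequence, pass to the limit), whereas you have unpacked the same observation into an explicit upper semicontinuity statement plus the extreme-value theorem for USC functions on a compact set.
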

Indeed, assuming it is false, we could find a sequence $R_n\to \infty$, 
$\xi_n\in\Lambda_\Gamma$, $\xi_n\to\xi \in\Lambda_\Gamma$, 
s.t. for all $\eta\in\Lambda_\Gamma$, the distance $d(o,(\xi_n\eta))$ 
is greater than $R_n$. Passing to the limit, for $\eta\neq\xi$, 
we obtain $d(o,(\xi\eta))=+\infty$, which gives a contradiction. \\

Now, let $v$ be a right horocyclic vector. 
Let $D_n\to +\infty$, $\alpha_n\to +\infty$, and $\tilde{v}$ be a lift of $v$ to $T^1\D$. 
There exists a point $\gamma_n.o$ in 
$Hor^+(g^{-D_n}\tilde{v})\setminus \mathcal{C}(\tilde{v},\alpha)$. 
Using fact \ref{uniformity}, we can find $\eta\in\Lambda_\Gamma$, $\eta\neq v^-$, 
s.t. the geodesic $(\tilde{v}^-\eta)$ intersects the ball $B(\gamma_n.o,R)$. 
Choose a vector $\tilde{w}_n\in \widetilde{\Omega}\cap T^1 B(\gamma_n.o,R)$ 
tangent to this geodesic. 
It satisfies $w_n^-=v^-$, $w_n^+=\eta$, $\tilde{w}_n=g^{-t_n}\tilde{v}_n$, 
$t_n\ge D_n-R$, $\tilde{v}_n\in W^{su}_+(\tilde{v})$, 
and $\tilde{w}_n$ does not belong to the cone $\mathcal{C}(\tilde{v},\alpha_n-R)$. 
Passing to $T^1 S$, we get a sequence of vectors $w_n$ of the compact set 
$T^1 B(o,R)\cap \Omega$. Up to a subsequence, it converges to some $z\in\Omega$. 
We proved that there exists $z\in\Omega$, s.t. for all $\alpha>0$, 
there exists $t_n\to +\infty$,
and $v_n\in W^{su}_+(v)$ s.t. $g^{-t_n}v_n\to z$, 
and $g^{-t_n}\tilde{v}_n\notin \mathcal{C}(\tilde{v},\alpha)$. \\

Conversely, assume the existence of such a $z\in\Omega$. Fix $\alpha>0$ and $D>0$.  
Let $\rho=d(o,\pi(z))$, $\alpha>0$, and $\beta=\alpha+\rho+1$.
 There exists $t_n\to\infty$, $v_n\in W^{su}_+(v)$, $g^{-t_n}v_n\to z$,
 and $\tilde{v}_n\notin\mathcal{C}(\tilde{v},\beta)$. 
For $n$ large enough, $t_n\ge D+\rho+1$, and $d(g^{-t_n}v_n,z)\le 1$. 
There exists an element $\gamma_n.o\in B(\pi(g^{-t_n}\tilde{v}_n), \rho)$. 
By construction, this element is in 
$Hor^+(g^{-D}\tilde{v})\setminus \mathcal{C}(g^{-D}\tilde{v},\alpha)$. 
Thus, $\tilde{v}$ is a right horocyclic vector. 
\end{proof}

%there exist
%  $z\in\Omega$, $t_i\to +\infty$ and $v_i=h^{s_i}v\in W^{su}_+(v)$, $v_i\to\infty$,
%  s.t. $g^{-t_i}v_i\to z$. 
%\end{defn}
%\noindent
%Here, $v_i=h^{s_i}v\to\infty$ means $s_i\to +\infty$. 

\subsection*{Proof of theorem \ref{density}}

We will prove 

\begin{prop}\label{right_iff_dense}  Let $S$ be a nonelementary hyperbolic 
surface. 
A vector  $\tilde{v}\in T^1\D$ is a right horocyclic vector if and only if
  $W^{su}_+(v)$ is dense in $\Omega$, where $v\in T^1S$ is the projection of $\tilde{v}$. 
\end{prop}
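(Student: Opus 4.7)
The proposition has two implications; I treat them separately, using the equivalent characterisation of right horocyclicity from Lemma \ref{right-horocyclic}.

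\textbf{$(\Leftarrow)$ Density implies right horocyclicity.} The commutation $g^{-t}\circ h^s = h^{se^{-t}}\circ g^{-t}$ gives $g^{-t}W^{su}_+(v) = W^{su}_+(g^{-t}v)$, so density of the positive horocycle in $\Omega$ persists for every $t>0$. Fix any $z \in \Omega$ and a sequence $t_n \to +\infty$. Since a dense semi-orbit on $\Omega$ (which has no isolated points) visits every open set at an unbounded sequence of positive times, I can choose, for any prescribed $\alpha_n \to +\infty$, parameters $\sigma_n \ge \alpha_n$ with $h^{\sigma_n}g^{-t_n}v \to z$ in $T^1 S$. Setting $s_n := \sigma_n e^{t_n} \ge 0$ and $v_n := h^{s_n}v$ gives $g^{-t_n}v_n = h^{\sigma_n}g^{-t_n}v \to z$. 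Since the horocyclic flow acts by hyperbolic arc length, the basepoint $\pi(g^{-t_n}\tilde v_n)$ sits at distance $\sigma_n$ from $\pi(g^{-t_n}\tilde v)$ along their common horocycle, which (as $\sigma_n \to +\infty$) forces it out of every cone $\mathcal{C}(\tilde v, \alpha)$. Lemma \ref{right-horocyclic} then gives right horocyclicity of $\tilde v$.

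\textbf{$(\Rightarrow)$ Right horocyclicity implies density.} By Lemma \ref{right-horocyclic}, diagonalising over $\alpha_n \to \infty$, I obtain $z \in \Omega$, $t_n \to +\infty$, $s_n \ge 0$ with $g^{-t_n}h^{s_n}v \to z$ in $T^1 S$ and $\pi(g^{-t_n}\tilde v_n) \notin \mathcal{C}(\tilde v, \alpha_n)$ in $T^1 \D$. Since $v^-\in\Lambda_{\rm hor}$ (as remarked after the definition), both $W^{su}(v)$ and, for a suitable choice of $z$, $W^{su}(z)$ are dense in $\Omega$. Given $w \in \Omega$ and $\epsilon > 0$, I first pick $\sigma \in \R$ with $d(h^\sigma z, w) < \epsilon/3$; continuity of $h^\sigma$ then yields $g^{-t_n}h^{s_n + \sigma e^{-t_n}}v = h^\sigma g^{-t_n}h^{s_n}v \to h^\sigma z$. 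I now apply the local product structure of the geodesic flow (Section \ref{Preliminaries}) to this pair, producing a vector $y_n$ on the strong-stable leaf of $h^\sigma z$ and, in horocycle coordinates, of the form $y_n = g^{r_n - t_n}h^{\tau_n}v$ with $\tau_n = s_n + \sigma e^{-t_n} + \mu_n$, $|\mu_n|$ bounded and $|r_n|$ small. Pushing forward by $g^{t_n - r_n}$ contracts the strong-stable distance exponentially, so $h^{\tau_n}v$ is close to $g^{t_n - r_n}h^\sigma z$; a recurrence argument for the forward $g$-orbit of $h^\sigma z$ (valid since $h^\sigma z \in \Omega$, the nonwandering set of the geodesic flow) produces a subsequence along which $g^{t_n - r_n}h^\sigma z$ is close to $w$. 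Hence $h^{\tau_n}v$ is close to $w$.

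The main obstacle is positivity of $\tau_n$. Exiting the cone $\mathcal{C}(\tilde v, \alpha_n)$ at depth $t_n$ forces $s_n \gtrsim \alpha_n e^{t_n}$ in hyperbolic arc length (by the arc-length/displacement relation used in the easy direction), which dwarfs the $O(1)$ corrections $\sigma e^{-t_n} + \mu_n$ and guarantees $\tau_n \ge 0$ for all large $n$. This positivity-preservation is precisely the geometric content of the cone condition in Lemma \ref{right-horocyclic}: without it, the local product bracket could require a small but strictly negative horocyclic shift, and this is exactly the mechanism behind the one-sided obstruction to half-horocycle density identified in Theorem \ref{density}.
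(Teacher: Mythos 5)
Your $(\Leftarrow)$ direction is correct and genuinely different from the paper's. You exploit the commutation $g^{-t}h^s=h^{se^{-t}}g^{-t}$ to transfer density of $W^{su}_+(v)$ to density of $W^{su}_+(g^{-t}v)$, then pick horocyclic parameters $\sigma_n\to\infty$ so that $h^{\sigma_n}g^{-t_n}v$ approaches a fixed $z\in\Omega$ while escaping every cone; this directly verifies the characterisation of Lemma \ref{right-horocyclic}. The paper instead fixes a periodic vector $p$, uses density to find $v_k=h^{s_k}v$ whose backward orbit $\varepsilon$-shadows that of $p$, and deduces that $g^{-2D}v_k$ lies near the orbit of $p$, hence near $\Gamma.o$, hence in the right horoball and outside the cone. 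Both arguments are sound; yours is cleaner because it bypasses the periodic orbit entirely. One small inaccuracy you should fix: in the bracket step of your $(\Rightarrow)$, the shift $\mu_n$ is \emph{not} bounded — unwinding the commutation gives $\mu_n=\nu_n e^{t_n-r_n}$ with $|\nu_n|\le\varepsilon$, so $|\mu_n|$ can be of order $\varepsilon e^{t_n}$. Your conclusion that $s_n$ dwarfs it still holds once $\alpha_n$ is large, but the statement as written is false.

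The $(\Rightarrow)$ direction, however, has a genuine gap. After the bracket you correctly obtain $h^{\tau_n}v$ exponentially close to $g^{t_n-r_n}h^\sigma z$, but you then need $g^{t_n-r_n}h^\sigma z$ to be close to the target $w$, and you assert this via ``a recurrence argument... valid since $h^\sigma z\in\Omega$''. This fails on three counts. First, nonwandering does not imply recurrence: on a geometrically finite surface with cusps there are plenty of vectors in $\Omega$ whose forward geodesic orbit escapes to infinity. Second, even if $h^\sigma z$ were positively recurrent, its return times near $h^\sigma z$ (and hence near $w$) are determined by the geodesic dynamics of $h^\sigma z$, whereas the times $t_n-r_n$ are dictated by the approximating sequence from Lemma \ref{right-horocyclic}; there is no mechanism aligning the two sequences. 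Third, the side claim that ``for a suitable choice of $z$, $W^{su}(z)$ is dense'' (needed to pick $\sigma$ with $h^\sigma z$ near $w$) is unjustified — $z$ is whatever limit point the lemma produces, and nothing forces $z^-\in\Lambda_{\rm hor}$.

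The paper sidesteps this time-alignment problem entirely by \emph{not} aiming at an arbitrary $w\in\Omega$. Instead it fixes a periodic $p$ with $W^{su}_+(p)$ dense (using Proposition \ref{periodic}), chooses $w_k\in W^{su}_{\delta_k/2}(z)\cap W^{ws}(p)$ so that the glued orbit is positively asymptotic to the \emph{compact} orbit of $p$, and arranges $t_{n_k}\ge 2s_k$ so that the resulting $y_k\in W^{su}_+(v)$ lands $2\varepsilon_k$-close to \emph{some} point of that compact orbit. Compactness yields a limit $g^\sigma p\in\overline{W^{su}_+(v)}$; then Fact \ref{open} and the density of $W^{su}_+(g^\sigma p)$ finish. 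If you want to repair your argument, this is the piece to borrow: target a periodic orbit with a dense positive half-horocycle rather than an arbitrary $w$, so that compactness replaces the unavailable recurrence.
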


and 

\begin{lem}\label{right-horospherical-gf} Let $S$ be a nonelementary geometrically finite surface. 
If $v^-\in\Lambda_{\rm hor}$, $v$ is a right
  horocyclic point iff $v^-$ is not the first endpoint of an interval of
  $S^1\setminus \Lambda_{\Gamma}$. 
\end{lem}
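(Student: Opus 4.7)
The plan is to prove the two implications separately; the non-trivial content is in $(\Leftarrow)$.

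For $(\Rightarrow)$: this follows at once from the preceding material. If $v^-$ is the first endpoint (counterclockwise) of an interval of $S^1\setminus\Lambda_{\Gamma}$, then by the Fact stating that $W^{su}_+(v)$ is not dense in $\Omega$ in that situation, Proposition~\ref{right_iff_dense} yields that $v$ is not a right horocyclic vector.

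For $(\Leftarrow)$: assume $v^-\in\Lambda_{\rm hor}$ is not the first endpoint of any gap. Since $\Lambda_{\Gamma}$ is perfect (as $\Gamma$ is non-elementary) and $\Lambda_{\rm p}$ is countable on a geometrically finite surface, $\Lambda_{\rm rad}$ is dense in $\Lambda_{\Gamma}$, so radial limit points accumulate to $v^-$ from the counterclockwise side. Fix $\alpha,D>0$ and normalise in the upper half-plane so that $v^-=0$, $v^+=\infty$, and the axis of $\tilde v$ is the imaginary axis; the right side of $v^-$ is then $\R_+$, and $Hor(g^{-D}\tilde v)=\{x^2+y^2\le ye^{-D}\}$. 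Pick $\xi\in\Lambda_{\rm rad}\cap\R_+$ sufficiently close to $0$. A short parametrisation of the half-circle $[o,\xi)$ shows that at heights $y\sim\xi^2 e^D$ the basepoint sits at $(x,y)\approx(\xi,\xi^2 e^D)$; this point belongs to $Hor(g^{-D}\tilde v)$, to the right half $\{x>0\}$, and lies outside $\mathcal{C}(g^{-D}\tilde v,\alpha)$ because $x/y\sim 1/(\xi e^D)$ exceeds the cone constant $c(\alpha)=2\sinh(\alpha/2)$ for $\xi$ small enough. By the radial property of $\xi$, there is an orbit point $\gamma\cdot o$ within bounded hyperbolic distance of this basepoint; a bounded perturbation shifts horoball depth and cone distance only by a bounded additive constant, absorbed by further shrinking $\xi$, so $\gamma\cdot o\in Hor^+(g^{-D}\tilde v)\setminus\mathcal{C}(g^{-D}\tilde v,\alpha)$. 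Hence $v$ is right horocyclic. If $v^-$ is parabolic, the parabolic subgroup fixing $v^-$ also provides a direct alternative construction: translate any orbit point deep in a horoball at $v^-$ arbitrarily far along the horocycle in the positive direction.

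The main obstacle is the apparent tension in $(\Leftarrow)$ between requiring the basepoint to lie deep in the horoball at $v^-$ (close to $v^-$) and outside a cone around the axis of $\tilde v$ (far from that axis). The geometric resolution is that for $\xi\in\Lambda_{\rm rad}$ just to the right of $v^-$, the ray $[o,\xi)$ enters deep horoballs at $v^-$ near its endpoint $\xi$ (since $\xi$ itself is close to $v^-$) while being orthogonal to $\partial\D$ at $\xi\neq v^-$ and therefore horizontally shifted off the axis of $\tilde v$; the radial property of $\xi$ then promotes this geometric position to an actual orbit point.
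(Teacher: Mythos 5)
Your proof takes a genuinely different route from the paper's on both directions, and the overall strategy is sound, but there are two issues worth noting.

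For $(\Rightarrow)$: the paper gives a direct argument: since being right horocyclic depends only on $v^-$, replace $v$ by the periodic vector on the axis of the hyperbolic element bounding the gap, and observe directly that with $o$ in a lift of the compact part, $Hor^+(v)\cap\Gamma.o=\emptyset$. Your route through the Fact and Proposition~\ref{right_iff_dense} is shorter, but it introduces a dependency problem: in the paper, the proof of Proposition~\ref{right_iff_dense} for \emph{periodic} vectors explicitly invokes the lemma you are trying to prove. Since a vector with $v^-$ a first endpoint can be taken periodic (and indeed your argument implicitly relies on the fact that the relevant property depends only on $v^-$), your appeal to the proposition is circular as written. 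One can patch it (apply the proposition only to a non-periodic $w$ with $w^-=v^-$, then transfer by the remark that ``right horocyclic'' depends only on the backward endpoint), but the direct geometric argument is cleaner and avoids the issue entirely.

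For $(\Leftarrow)$: your direct construction is a different approach from the paper's, which proves the contrapositive: if $v$ is not right horocyclic, a right half-horoball minus a cone avoids $\Gamma$ times the lift of the compact part, hence projects into a single cusp or funnel, forcing $v^-$ to be parabolic or a first endpoint. Your construction is attractive because it is concrete, but it has a real gap at the key step ``by the radial property of $\xi$, there is an orbit point $\gamma\cdot o$ within bounded hyperbolic distance of \emph{this} basepoint.'' Radiality only provides orbit points within bounded distance of the ray $[o\xi)$ whose projections to the ray tend to $\xi$; it gives no control on where along the ray those projections fall. The return times to the compact part along $[o\xi)$ can have arbitrarily large gaps (cusp excursions), so there is no guarantee of an orbit point at the specific depth you need. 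The gap is fixable, for instance by taking $\xi$ to be a hyperbolic fixed point close to $v^-$ on the right (these are dense in $\Lambda_\Gamma$), so that $[o\xi)$ is eventually periodic and returns to the compact part with gaps bounded by the translation length; then, choosing $\xi$ small enough that the admissible depth window is longer than that translation length, the argument goes through. As written, however, the step is unjustified. Two smaller points: the height $y\sim\xi^2 e^D$ lies on the \emph{boundary} of $Hor(g^{-D}\tilde v)$ (the lower root of $y^2-ye^{-D}+\xi^2=0$), so you need to sit strictly between $\xi^2 e^D$ and $e^{-D}$; and the parabolic alternative you describe is vacuous here (parabolic points are not in $\Lambda_{\rm hor}$ for geometrically finite surfaces, which is exactly the role of the hypothesis $v^-\in\Lambda_{\rm hor}$; moreover the parabolic translates of $o$ stay at fixed Busemann level, so that construction would not in fact reach deep right half-horoballs).
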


Theorem \ref{density} is  an immediate consequence of these two results.
Let us now prove them. 

\begin{fact}\label{open} If $y\in \overline{W^{su}_+(x)}$, then $W^{su}_+(y)\subset
  \overline{W^{su}_+(x)}$. 
\end{fact}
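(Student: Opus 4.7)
The plan is to observe that this is a direct consequence of two elementary facts: the semigroup property $h^{s+t}=h^t\circ h^s$ and the continuity of the horocyclic flow. There is no hyperbolic geometry or group-theoretic input needed.

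First I would fix $y\in\overline{W^{su}_+(x)}$ and choose a sequence $s_n\ge 0$ with $h^{s_n}x\to y$. To prove the inclusion $W^{su}_+(y)\subset\overline{W^{su}_+(x)}$, it suffices to pick an arbitrary $t\ge 0$ and show that $h^t y$ lies in $\overline{W^{su}_+(x)}$. Applying $h^t$ to the convergent sequence and using continuity of the flow map $h^t:T^1S\to T^1S$, one obtains $h^{s_n+t}x=h^t(h^{s_n}x)\to h^t y$.

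The key observation is then that each approximant $h^{s_n+t}x$ belongs to $W^{su}_+(x)$, since $s_n+t\ge 0$. Hence $h^t y\in\overline{W^{su}_+(x)}$. As $t\ge 0$ was arbitrary, the conclusion follows.

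I do not expect any obstacle here: the statement is really just the semi-invariance of $\overline{W^{su}_+(x)}$ under the positive-time horocyclic semiflow, which in turn follows from continuity and the fact that $W^{su}_+(x)$ is itself positively invariant. The nontrivial work in the section lies in Proposition \ref{right_iff_dense} and Lemma \ref{right-horospherical-gf}, for which this fact will serve as a convenient small tool (for instance, to upgrade density of a single vector $y$ in the closure to density of the whole half-horocycle through $y$).
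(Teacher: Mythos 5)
Your argument is correct and is exactly the one the paper has in mind; the paper's proof simply says ``Evident with the parametrization of $W^{su}$ by the horocyclic flow,'' and your write-up spells out that one-line remark via the semigroup property $h^{s_n+t}=h^t\circ h^{s_n}$ with $s_n,t\ge 0$ and continuity of $h^t$. Nothing is missing.
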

\begin{proof} Evident with the parametrization of $W^{su}$ by the horocyclic flow. 
\end{proof}

For a vector $v\in T^1S$, we denote by $\tilde{v}$ a lift to $T^1\D$, 
and by $v^\pm\in S^1$ the enpoints of this lift on the boundary. 

\begin{prop}\label{periodic} Let $S$ be a nonelementary surface. 
If $p\in\Omega$ is a periodic vector for the geodesic flow,
 then $W^{su}_+(p)$ is dense in $\Omega$ if and only if
  $p^-$ is not the first endpoint of an interval of $S^1\setminus
  \Lambda_{\Gamma}$. 
\end{prop}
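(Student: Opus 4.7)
The plan is to deduce Proposition \ref{periodic} from Proposition \ref{right_iff_dense}, reducing everything to the question: when is a periodic vector $p$ a right horocyclic vector? The structural fact to exploit is that periodicity of $p$ provides a hyperbolic element $\gamma_0 \in \Gamma$ with $\gamma_0 \tilde{p} = g^T \tilde{p}$, whose axis equals the geodesic $(p^-, p^+)$ defined by $\tilde{p}$ and which preserves each of the two open half-planes bounded by this axis. I would normalize the upper half-plane model so that $p^- = 0$, $p^+ = \infty$, the axis is the positive imaginary ray, and $\gamma_0(z) = \lambda z$ with $\lambda = e^T > 1$. One checks (consistently with the convention that $h^s\tilde{p}$ for $s > 0$ moves the basepoint into $\{\re z > 0\}$) that the counterclockwise side of $p^-$ corresponds to $\{\re z > 0\}$, so $Hor^+(g^{-D}\tilde{p})$ is the right half of the horoball tangent to $\R$ at $0$ at depth $D$.

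For the ``$(\Leftarrow)$'' direction, assume $p^-$ is not a first endpoint of a counterclockwise interval. Then some $\eta \in \Lambda_\Gamma$ satisfies $\eta > 0$; pick $\delta_j \in \Gamma$ with $\delta_j o \to \eta$. Writing $\delta_j o = (x_j,y_j)$ gives $x_j \to \eta > 0$ and $y_j \to 0$, hence $\delta_j o$ sits on the counterclockwise side and has hyperbolic distance $\sinh^{-1}(x_j/y_j) \to +\infty$ to the axis. Since $\gamma_0^{-n}(z) = \lambda^{-n}z$ is a $\Gamma$-isometry preserving both the axis (hence distances to it) and the half-plane $\{\re z > 0\}$, and drags any point into arbitrarily deep horoballs at $p^-$, I choose $j$ first so that $\sinh^{-1}(x_j/y_j) > \alpha$, then $n$ so large that $\gamma_0^{-n}\delta_j o \in Hor(g^{-D}\tilde{p})$. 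This gives an orbit point of $\Gamma \cdot o$ in $Hor^+(g^{-D}\tilde{p}) \setminus \mathcal{C}(g^{-D}\tilde{p},\alpha)$, so $p$ is right horocyclic.

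For the ``$(\Rightarrow)$'' direction I argue contrapositively. Assume an interval $(p^-,\eta_0) \subset S^1 \setminus \Lambda_\Gamma$, and suppose for contradiction that $\gamma_n o \in Hor^+(g^{-D_n}\tilde{p}) \setminus \mathcal{C}(g^{-D_n}\tilde{p},\alpha_n)$ with $D_n,\alpha_n \to +\infty$. The hypotheses force $\gamma_n o = (x_n,y_n)$ with $x_n > 0$, $x_n,y_n \to 0$, and $x_n/y_n \to +\infty$. Applying Fact \ref{uniformity} to the limit point $p^-$ at the shifted base point $\gamma_n o$ yields $\eta_n \in \Lambda_\Gamma \setminus \{p^-\}$ such that the geodesic $(p^-,\eta_n)$ meets $B(\gamma_n o,R)$ (with $R$ uniform). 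A short computation on the semicircle $y^2 = x(\eta_n - x)$, combined with $x_n/y_n \to \infty$, forces $\eta_n > 0$ and $\eta_n \sim y_n^2/x_n \to 0^+$. For $n$ large this puts $\eta_n \in (p^-,\eta_0) \cap \Lambda_\Gamma$, contradicting the hypothesis.

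The main obstacle is the geometric calculation in the $(\Rightarrow)$ direction: one must extract from the metric data on $\gamma_n o$ both the side and the rate of convergence of the auxiliary limit point $\eta_n$ to $p^-$. The UHP model makes this transparent via the relation $\eta_n \sim y_n^2/x_n$, whereas a coordinate-free treatment would require more careful bookkeeping with Busemann cocycles and visual angles.
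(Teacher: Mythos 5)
Your plan is circular, and the circularity cannot be repaired without replacing it with essentially the paper's argument. You propose to deduce Proposition~\ref{periodic} from Proposition~\ref{right_iff_dense}. But the paper's proof of \ref{right_iff_dense} begins by delegating the periodic case back to \ref{periodic} (``The case of periodic vectors $p$ follows from proposition \ref{periodic}''), and more seriously, the proof of the direction you need --- right horocyclic $\Rightarrow$ $W^{su}_+$ dense --- is launched by the sentence ``Let $p$ be a periodic vector s.t.\ $W^{su}_+(p)$ is dense in $\Omega$.'' That hypothesis has no source other than Proposition~\ref{periodic} itself. So Proposition~\ref{periodic} must come first; it is the bootstrap that provides at least one dense positive half-horocycle, which is then used in \ref{right_iff_dense} to treat arbitrary nonperiodic right horocyclic vectors.

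The substantive dynamics in the paper's proof of the hard direction of \ref{periodic} is the Coudène-style specification argument: one first shows $g^\R W^{su}_+(p)$ is dense (using density of $\Gamma.x^+$ in $\L_\Gamma$ and the hypothesis that $p^-$ is not the first endpoint to find approximants on the correct side of $p^-$), and then upgrades this to density of $W^{su}_+(p)$ itself by exploiting the \emph{local product structure} of the geodesic flow and the \emph{nonarithmetic length spectrum} to glue together an orbit that leaves the periodic orbit of $p$, winds a controlled number of times around an auxiliary periodic orbit $p_0$, and returns, thereby producing points of $W^{su}_+(p)$ arbitrarily close to any point of $(g^tp)_{t\in\R}$. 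None of this can be replaced by an appeal to ``right horocyclic''; the notion of right horocyclic vector only becomes useful once some periodic orbit with a dense positive half-horocycle is already in hand.

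Your half-plane computations in the $(\Leftarrow)$ direction are, taken on their own, a correct proof that a periodic $p$ with $p^-$ not the first endpoint of a complementary interval \emph{is a right horocyclic vector}: the hyperbolic element $\gamma_0$ preserves both the half-plane $\{\re z>0\}$ and the distance to the axis while pushing points into arbitrarily deep horoballs at $p^-$, so the orbit $\Gamma.o$ meets every $Hor^+(g^{-D}\tilde p)\setminus\mathcal C(g^{-D}\tilde p,\alpha)$. But this is a statement about $\Gamma$-orbit geometry, not about density of $W^{su}_+(p)$, and the gap between the two is exactly the content of \ref{right_iff_dense}. Similarly, you do not need the Fact~\ref{uniformity} detour for the $(\Rightarrow)$ direction: as the paper observes, if $p^-$ is the first endpoint of $]p^-\eta[\subset S^1\setminus\L_\Gamma$ then all but a bounded arc of $W^{su}_+(p)$ leaves $\Omega$ entirely, so $W^{su}_+(p)\cap\Omega$ is a compact arc and cannot be dense in $\Omega$ --- a one-line argument that avoids any asymptotic computation.
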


This result is valid on any nonelementary negatively curved surface,
 without geometrical
finiteness assumption. 

Recall first that on a nonelementary negatively curved surface,
 the {\em length spectrum} is
non arithmetic (see \cite{Dalbo}), that is the set of lengths of periodic
orbits $\{l(\gamma), <\gamma> \mbox{ periodic} \}$ generates a dense subgroup
of $\R$. 

\begin{proof} Note first that if $p\in T^1 S$ 
is a periodic vector for the geodesic flow and $p^-$ is 
the first endpoint of an interval $]p^-\eta[$ of 
$S^1\setminus \Lambda_\Gamma$, then $W^{su}_+(p)$ cannot be dense in $\Omega$. 
Indeed, the vectors of $W^{su}_+(p)$ pointing in $]p^-\eta[$ 
do not even belong to $\Omega$. 

Assume now that $p^-$ is not the first endpoint of an interval 
of $S^1\setminus \Lambda_\Gamma$. 
We follow \cite{C} almost verbatim. 

First, $g^{\R}W^{su}_+(p)$ is dense in $\Omega$ 
(see also \cite[Lemma1]{C}). 
Indeed, there exists $x\in \Omega$, s.t. $(g^{t}x)_{t\ge
  0}$ is dense in $\Omega$. 
Let $\tilde{x}$ (resp. $\tilde{p}$) be a lift of
$x$ (resp. $p$) to $T^1\D$, and
$x^+ $  its positive endpoint in $S^1$. 
The orbit $\Gamma.x^+$ is dense
in~$\Lambda_{\Gamma}$. 
As $p^-$ is not the first endpoint of an interval of
$S^1\setminus\Lambda_\Gamma$, we can find a sequence $x_n^+\in\Gamma.x^+$ 
converging to $p^-$, with
$x_n^+\ge p^-$ (in the counterclockwise order).

\begin{figure}[ht!]
\begin{center}\label{faiblestable}
\input{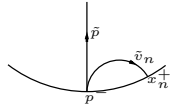}
\caption{Construction of a dense geodesic in the weak unstable manifold $W^{wu}(p)$}
\end{center}
\end{figure}

 If $n$ is large enough, the
unique vector $\tilde{v}_n$ of $W^{ss}(\tilde{p})\cap(p^-x_n^+)$ belongs to
the positive half-horocycle $W^{su}_+(\tilde{p})$, so that on $T^1 S$, 
$v_n\in W^{su}_+(p)$ and $g^{\R}v_n$ is dense in $\Omega$. Therefore, $g^\R W^{su}_+(p)$ is dense in $\Omega$. \\

Now, let us show that $W^{su}_+(p)$ is dense in $g^{\R}W^{su}_+(p)$. 
(We still follow \cite{C}). 
Fix $\varepsilon>0$, and a periodic vector $p_0\in T^1 S$, s.t.
 $\exists m,n\in\Z$, with $|ml(p)+nl(p_0)|<\varepsilon$.
Without loss of generality, assume $n\ge 0$. 
Let $\delta=\delta(\varepsilon,p_0)>0$ be the constant appearing in 
the local product structure property around $p_0$ (see end of section 2). 

As $p^-$  is not the first endpoint of an
interval of $S^1\setminus \Lambda_{\Gamma}$, we can
lift $p_0$ to $\tilde{p_0}$ in such a way that $p_0^+\in[p^-p^+]$.

Let $v\in W^{su}_+(p)\cap W^{ws}(p_0)$ be the vector obtained as the projection on $T^1 S$ of 
the unique $\tilde{v}$ of $W^{su}_+(\tilde{p})\cap W^{ws}(\tilde{p}_0)$. 
If $p_0^+$ is well chosen (i.e. close enough to $p^-$), 
we have $W^{su}_{2\varepsilon}(v)\subset W^{su}_+(p)$. 
The geodesic orbit $(g^tv)_{t\in\R}$ is negatively asymptotic to the periodic orbit of $p$,
 positively asymptotic to the periodic orbit of~$p_0$. 

The end of the proof consists in using once again the local product structure to construct 
an orbit which is negatively asymptotic to the orbit of $p$, 
positively asymptotic to the orbit of $p$, and ``in the middle''  goes from the orbit 
of $p$ to the orbit of $p_0$ following the orbit of $v$, turns a certain number of times
 around the periodic orbit of $p_0$,
and later comes back to the orbit of $p$.

Choosing the number of turns around the orbit of $p_0$ will 
allow to construct vectors of $W^{su}_+(p)$ arbitrarily close to any vector of 
the closed orbit $(g^tp)_{t\in\R}$. \\

First, the vector $v$ belongs to $W^{su}_+(p)$, and $p$ is periodic, 
so that there exists $\tau\ge 0$, satisfying
 $g^{-\tau}v\in W^{su}_\varepsilon(g^{-\tau}p)=W^{su}_\varepsilon(p)$. 
Choose the smallest such $\tau_1$, and let $v_1=g^{-\tau_1}v$. 
There exists $s_1\ge \tau_1$ 
s.t. $g^{s_1}v_1=g^{s_1-\tau_1}v\in W^{ss}_{\delta/2}(p_0)$;   
$s_1$ is  the ``time'' needed to come from an $\epsilon$-unstable neighbourhood 
of $p$ to a $\delta/2$-neighbourhood of the orbit of $p_0$, 
following the orbit of $v$. 
As $p_0$ is periodic, note that for all nonnegative integer $i\ge 0$, $g^{s_1+il(p_0)}v_1\in W^{ss}_{\delta/2}(p_0)$. 

There exists a vector $w\in W^{su}_{\delta/2}(p_0)\cap W^{ws}(p)$. 
Let $s_2>0$ s.t. $g^{s_2}w\in W^{ss}_{\varepsilon}(p)$. 

For all $k\in\N$, ass $d(w, g^{s_1+k.n.l(p_0)}v_1)\le \delta$, 
we use the local product structure of the geodesic flow, and
obtain a vector of $W^{su}_\varepsilon(g^{s_1+knl(p_0)\pm\varepsilon}v_1)\cap W^{ss}_\varepsilon(w)$.
The resulting geodesic orbit on $T^1 S$ is negatively and 
positively asymptotic to the orbit of  $p$, 
going from $p$ to $p_0$, $\varepsilon$-shadowing the orbit of $p_0$ during the
time $k.nl(p_0)$ and coming back to the orbit of $p$. 

The key point (and the only difference with \cite{C}) 
is that the ``gluing'' was done between some vector $g^tv$, $t\ge 0$
of the {\em positive} geodesic orbit of $v$,  and a vector ``coming back'' from $p_0$ to $p$. 
It ensures that the resulting geodesic orbit intersects $W^{su}_\varepsilon(g^tv)$. As 
$W^{su}_\varepsilon(v)$ contains $g^{-t}W^{su}_\varepsilon(g^tv)$,
 this orbit intersect therefore $W^{su}_\varepsilon(v)$  which is included in the positive 
unstable horocycle $W^{su}_+(p)$. 

Note that the time $s_1$ needed to go from  $p$ to $p_0$, 
and the time $s_2$ to come back, depend only on $\varepsilon$, and not on $k$, 
so that we can choose $k\in\N$ as large as we need. 

Let us repeat verbatim the final argument of \cite{C}. 
For all $\varepsilon>0$, we found $s_1>0$, $s_2>0$, 
s.t. for all $k\in\N$, there exists $u\in W^{ss}_{2\varepsilon}(p)$, 
and $s_k\in\R$, with $|s_k-s_1-s_2|<2\varepsilon$, and $g^{-s_k-knl(p_0)}u\in W^{su}_\varepsilon(p)$. 
Let $j\in\Z$ be the greatest integer such that $jl(p)<-s_k-knl(p_0)$. Then, 
$g^{jl(p)}u=g^{jl(p)+s_k+knl(p_0)}g^{-s_k-knl(p_0)}u \in W^{ss}(p)$ is 
$\varepsilon$-close to the vector $g^{s_k+kl(p_0)}p$ on the periodic orbit of $p$.
This vector also coincides with $g^{s_k+knl(p_0)+m'l(p)}p$ 
for all $m'\in\Z$. 
In particular, taking  $m'=km$, we find a vector on $W^{ss}(p)$ 
 very close to $g^{s_1+s_2+k(ml(p_0)+nl(p))}p$ for all positive integers $k\in\N$. 
As the length spectrum is non arithmetic,
 any point on the (periodic) geodesic orbit of $p$ is $\varepsilon$ close to such a point. 
Thus, $\overline{W^{su}_+(p)}$ contains $(g^tp)_{t\in\R}$, and therefore also $g^\R W^{su}_+(p)$ which is dense in $\Omega$.
This ends the proof.
\end{proof}

{\bf Proof of proposition \ref{right_iff_dense}}
The case of periodic vectors $p$ follows from proposition \ref{periodic} and the proof of 
lemma \ref{right-horospherical-gf}. We consider now only nonperiodic vectors. 

Assume first that $W^{su}_+(v)\cap\Omega$ is dense in $\Omega$, 
and prove that $v$ is a right horocyclic vector. 

Let $p$ be a vector on a periodic geodesic, $l(p)$ its length, 
and $d(p)$ the distance between $o$ and its orbit.
Fix $\alpha>0$ and $D>0$. 
Without loss of generality, we assume $D\ge l(p)+d(p)+2$. 
Consider the cone $\mathcal{C}=\mathcal{C}(g^{-D}\tilde{v},\alpha)$,
 where $\tilde{v}$ is a lift of $v$ to $T^1\D$.  
Remark that the distance between (the basepoint of) $h^s(g^{-D}\tilde{v})$ 
and the cone $\mathcal{C}$ goes to infinity when $s\to +\infty$.  

Fix $\varepsilon\in ]0,1[$. 
By density of $W^{su}_+(v)$ in $\Omega$, we can find an infinite sequence 
$v_k\in W^{su}_+(v)$, $v_k=h^{s_k}v$, $s_k\to \infty$, 
s.t. $v_k$ is so-close to $p$ that $(g^{-t}v_k)_{0\le t\le 2D}$ 
and $(g^{-t}p)_{0\le t\le 2D}$ stay $\varepsilon$-close each other. 
We deduce that $g^{-2D}v_k$ is at distance $\varepsilon$ from
$g^{-2D}p$, hence from the orbit of $p$, 
and therefore at distance less than $1+l(p)+d(p)$ 
from the projection $\pi(o)$ of $o$ on $S$. 
Lift $v$ to $\tilde{v}\in T^1\D$, and $v_k$ to $\tilde{v_k}\in W^{su}_+(\tilde{v})$
As $v_k=h^{s_k}v$ goes to infinity on $W^{su}_+(v)$, 
the distance between $g^{-2D}\tilde{v_k}$ and $\mathcal{C}$ goes to infinity. 
Therefore, we can  assume this distance be greater than $l(p)+d(p)+2$. 
There exists a point of $\Gamma.o$ at distance at most $d(p)+l(p)+1$ of $g^{-2D}v_k$. 
By construction, this point is inside $Hor^+(g^{-D}v)\setminus \mathcal{C}(v,\alpha)$.
This construction works for all $\alpha>0$ and $D>0$ large enough, 
so that $\tilde{v}$ is a right horospherical vector. \\

Let us establish now the other direction, adapting methods of \cite{C}. 
Let $v$ be a right horocyclic vector. 
We will prove that there exists a periodic vector $p\in \overline{W^{su}_+(v)}$, 
with $W^{su}_+(p)$ dense in $\Omega$. 
%Fact \ref{open} then allows to conclude that $W^{su}_+(v)$ is dense in $\Omega$.  

Let $t_n\to\infty$, $v_n\in W^{su}_+(v)\cap \Omega$, $v_n\to \infty$ on the leaf, 
s.t. $g^{-t_n}v_n$ converges to some $z\in\Omega$, with $g^{-t_n}\tilde{v}_n$
staying outside a given cone $\mathcal{C}(v,2)$. 
Let $p$ be a periodic vector s.t. $W^{su}_+(p)$ is dense in $\Omega$. 
Choose $\varepsilon_k\to 0$ and let $\delta_k$ be the constant associated to $\varepsilon_k$
 by the local product structure property around $z$. 

Using  this product structure,
 we construct an orbit negatively asymptotic to the negative orbit of $z$, 
and positively asymptotic to the orbit of $p$. More precisely, 
we can find $s_k\ge 0$, and $w_k\in W^{su}_{\delta_k/2}(z)\cap W^{ws}(p)$,
s.t. for all $t\ge s_k$, $g^{t}w_k$ is $\varepsilon_k$-close to the orbit of
$p$. Note that $w_k$ is $\delta_k/2$-close to $z$. 

Now, let $n_k$ be large enough so that $t_{n_k}\ge 2s_k$ and
 $d(g^{-t_{n_k}}v_{n_k},z)\le \delta_k/2$. 
In particular, the distance between $g^{-t_{n_k}}v_{n_k}$ 
and $w_k$ is at most $\delta_k$. 

As $v_{n_k}\in W^{su}_+(v)$ and $g^{-t_{n_k}}v_{n_k}$ is not in the cone $\mathcal{C}(v,2)$, 
 the local strong stable manifold 
$\displaystyle W^{su}_{2\varepsilon_k}(g^{-t_{n_k}}v_{n_k})$ is included in 
$\displaystyle W^{su}_+(g^{-t_{n_k}}v)$. 
This fact will be crucial  for the end of the proof; indeed,  we will now  glue 
the past orbit of $g^{-t_{n_k}}v_{n_k}$ with the future orbit of $w_k$,  
and the resulting orbit intersects 
the {\em positive} horocyclic orbit $W^{su}_+(v)$. Let us detail this gluing. 
Let $\tilde{v}$ be a lift of $v$, $\tilde{v}_{n_k}$ 
the lift of $v_{n_k}$ on $W^{su}_+(\tilde{v})$, 
$\tilde{z}_k$ (resp. $\tilde{w}_k$) be the lift of $z$ (resp $w_k$) 
$\delta_k/2$-close to $g^{-t_{n_k}}\tilde{v}_{n_k}$. 
Consider the geodesic joining $v^-$ to $\tilde{w}_k^+$.
 By the above, this geodesic crosses $W^{su}_{2\varepsilon_k}(g^{-t_{n_k}}v_{n_k})\subset W^{su}_+(g^{-t_{n_k}}v)$,
  and therefore also $W^{su}_+(\tilde{v})$. 

Let $\tilde{y}_k$ be
 the unique vector of $W^{su}_+(\tilde{v})$ on this geodesic. 
By construction $(g^{-t}\tilde{y}_k)_{t\ge 0}$ is asymptotic to $v^-$, 
and $g^{-t}\tilde{y}_k$
 belongs to  a $2\varepsilon_k$-neighbourhood of
 $\tilde{w}_k$ for $t\simeq t_{n_k}$, and then it becomes positively asymptotic to
 $(g^t\tilde{w}_k)_{t\ge -t_{n_k} }$. 
In particular, on $T^1\D$, as $s_k$ is the ``time'' 
needed on the orbit of $w_k$ to join the $\varepsilon_k$-neighbourhood 
of the orbit of $p$, for $t\ge s_k-t_{n_k}$,
 the orbit of $y_k$ becomes $2\varepsilon_k$-close to
 the orbit of $p$. 
We chose $t_{n_k}\ge 2s_k$ so that for $t=0$, $y_k$ is
 $2\varepsilon_k$-close to the orbit of $p$. 

As this orbit is a compact set, up to a subsequence, we can
 assume that $y_k$ converges. 
It implies  that there exists $0\le \sigma\le l(p)$ st $g^\sigma p\in \overline{W^{su}_+(v)}$. 
Of course $g^\sigma p$ is periodic and $W^{su}_+(g^\sigma p)$ is dense in $\Omega$. 
 Fact \ref{open} implies now that  $W^{su}_+(v)\cap \Omega$ is dense in $\Omega$. \\
\eop

{\bf Proof of lemma \ref{right-horospherical-gf}}
Assume first that $v^-$ is the first endpoint of an interval of $S^1\setminus \Lambda_\Gamma$. 
As the property of being right horospherical or not depends only on $v^-$, 
we can assume that $v$ is a periodic vector on the closed geodesic closing the funnel. 

By definition of a funnel, it becomes clear that if $o$ was chosen 
in a lift of the compact part of $S$, the intersection of the open right 
horoball $Hor^+(v)$ with $\Gamma.o$ is empty. 
Thus, $\tilde{v}$ is not a right horospherical vector. \\

Suppose now that $v$ is not a right horospherical vector. 
There exists a cone $\mathcal{C}(v,\alpha)$ and a right horoball $Hor^+(g^{-T}v)$, 
s.t. $\Gamma.o$ does not intersect $Hor^+(g^{-T}v)\setminus \mathcal{C}(v,\alpha)$. 
Let us shrink the horoball from a distance $d$ 
equal to the diameter of the compact part $C(S)$ of $S$. 
Thus, the set $Hor^+(g^{-T-d}v)\setminus\mathcal{C}(v,\alpha)$ 
does not intersect the $\Gamma$-orbit $\Gamma.\widetilde{C(s)}$ 
of the lift of the compact part. 
In other words, viewed on $S$, the projection of
 $Hor^+(g^{-T-d}v)\setminus\mathcal{C}(v,\alpha)$, which is a connected set,  
is necessarily included in a cusp or a funnel. 
It implies immediately that $v^-$ is a parabolic point or 
is the first  endpoint of an interval of $S^1\setminus \Lambda_\Gamma$.
 By assumption, $v^-$ cannot be parabolic, so that the result is proven. 
\eop

\subsection*{Geometrically infinite surfaces}

On these surfaces, the situation is -not surprisingly - more complicated, and 
we only discuss here partial results on the behaviour of positive (resp. negative) half-horocycles. 

Proposition \ref{periodic} gives a complete answer for periodic vectors. 
Recall the 
\begin{thm}[Hedlund, \cite{Hedlund}, thm 4.2 ] 
Let $S=\D/\Gamma$ be a hyperbolic surface of the first kind, i.e. such that $\Lambda_\Gamma=S^1$. 
Let $v\in T^1 S$ be s.t. $(g^{-t}v)_{t\ge 0}$ returns infinitely often in a compact set. 
Then the positive half-horocycle $(h^s v)_{s\ge 0}$ is dense in $T^1 S$.
\end{thm}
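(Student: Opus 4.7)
The plan is to reduce this theorem to Proposition~\ref{right_iff_dense} via Lemma~\ref{right-horocyclic}, using the geodesic recurrence hypothesis together with $\Lambda_\Gamma = S^1$.

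Since $\Lambda_\Gamma = S^1$, the ordinary set is empty, so the condition on $v^-$ from Theorem~\ref{density} holds vacuously and $\Omega = T^1 S$. By Proposition~\ref{right_iff_dense}, density of $W^{su}_+(v)$ in $T^1 S$ is equivalent to $v$ being a right horocyclic vector; by Lemma~\ref{right-horocyclic}, this amounts to exhibiting a single $z \in T^1 S$ such that, for every $\alpha > 0$, there are sequences $t_n \to +\infty$ and $v_n \in W^{su}_+(v)$ with $g^{-t_n}v_n \to z$ in $T^1 S$ and $g^{-t_n}\tilde v_n \notin \mathcal C(\tilde v, \alpha)$.

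The backbone is the commutation $g^{-t}\circ h^\sigma = h^{\sigma e^{-t}}\circ g^{-t}$ together with an elementary hyperbolic computation: placing $\tilde v^- = \infty$ and $\pi(\tilde v) = i$ in the upper half-plane, the basepoint of $h^\sigma g^{-t}\tilde v$ equals $(\sigma e^t, e^t)$ and lies at hyperbolic distance exactly $\sinh^{-1}(\sigma)$ from the ray $(g^{-t'}\tilde v)_{t' \ge 0}$, \emph{independently of} $t$. Hence $h^\sigma g^{-t}\tilde v \notin \mathcal C(\tilde v, \alpha)$ as soon as $\sigma > \sinh(\alpha)$. Using the recurrence hypothesis, I extract $s_n \to +\infty$ with $g^{-s_n}v \to y \in T^1 S$; then the vectors $v_n := h^{\sigma e^{s_n}}v \in W^{su}_+(v)$ satisfy $g^{-s_n}v_n = h^\sigma g^{-s_n}v \to h^\sigma y$, with lifts outside $\mathcal C(\tilde v, \alpha)$ for every $\sigma > \sinh(\alpha)$.

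The main technical obstacle is fixing a single $z$ working for all $\alpha$: a priori, the limit $h^\sigma y$ depends on $\sigma$ and hence on $\alpha$. Here $\Lambda_\Gamma = S^1$ is genuinely used, as it forces every vector of $T^1 S$ to be nonwandering for the horocyclic flow: for any neighbourhood $U$ of $y$ and any $N > 0$, there exist $y' \in U$ and $\sigma \ge N$ with $h^\sigma y' \in U$. A diagonal argument then produces, for each $\alpha$, sequences converging to $z := y$: shrink $U_m = B(y, \epsilon_m)$ with $\epsilon_m \to 0$, use nonwandering to pick $\sigma_m > \sinh(\alpha)$ and $y'_m \in U_m$ with $h^{\sigma_m}y'_m \in U_m$, then select $n_m$ so large that $g^{-s_{n_m}}v$ lies within a tolerance (dictated by the local Lipschitz constant of $h^{\sigma_m}$ near $y$) of $y'_m$; setting $T_m = s_{n_m}$ and $V_m = h^{\sigma_m e^{s_{n_m}}}v$ gives the required data. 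The growth of the Lipschitz constants with $\sigma_m$ is controllable because $\epsilon_m$ can be chosen arbitrarily small. An alternative route, closer to the proof of Proposition~\ref{right_iff_dense}, would be to first construct a periodic vector $p$ in $\overline{W^{su}_+(v)}$ and conclude via Proposition~\ref{periodic} (whose condition on $p^-$ is automatic since $\Lambda_\Gamma = S^1$) combined with Fact~\ref{open}.
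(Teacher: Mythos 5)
The paper does not actually prove this statement; it is quoted as Hedlund's theorem (with a citation to \cite{Hedlund}, Theorem 4.2) in the subsection on geometrically infinite surfaces, without proof. So there is no ``paper's own proof'' to compare against; I am evaluating your argument on its own terms, as an attempt to rederive Hedlund's result from the machinery the paper develops.

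Your overall strategy is sensible and most of it is sound: the reduction to Proposition~\ref{right_iff_dense} via Lemma~\ref{right-horocyclic}, the use of the commutation $g^{-t}h^{\sigma}=h^{\sigma e^{-t}}g^{-t}$, and the explicit half-plane computation showing that the base point of $h^{\sigma}g^{-t}\tilde v$ lies at hyperbolic distance $\sinh^{-1}(\sigma)$ from the ray $(g^{-t'}\tilde v)_{t'\ge 0}$, independently of $t$, are all correct and give, for each \emph{fixed} $\sigma>0$, that $g^{-s_n}\bigl(h^{\sigma e^{s_n}}v\bigr)=h^{\sigma}g^{-s_n}v\to h^{\sigma}y$ with lifts outside $\mathcal C(\tilde v,\sinh^{-1}\sigma)$.

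The gap is in the diagonal argument that tries to replace the $\sigma$-dependent limit $h^{\sigma}y$ by a single $z=y$ valid for all $\alpha$. You pick $y'_m\in B(y,\epsilon_m)$ and $\sigma_m>\sinh(\alpha)$ with $h^{\sigma_m}y'_m\in B(y,\epsilon_m)$, and then want $g^{-s_{n_m}}v$ within a tolerance $\delta_m$ of $y'_m$ so that $h^{\sigma_m}g^{-s_{n_m}}v$ stays close to $y$. But $g^{-s_n}v$ accumulates at $y$, not at $y'_m$, so the best you can guarantee is $d(g^{-s_{n_m}}v,y'_m)\lesssim\epsilon_m$; the required tolerance is roughly $\delta_m\approx\epsilon_m/L_m$, where $L_m$ is the Lipschitz constant of $h^{\sigma_m}$ (which grows like $\sigma_m^{2}$, since $\operatorname{Ad}(n_{\sigma}^{-1})$ has norm $\sim\sigma^{2}$). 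Since $\sigma_m$ is only obtained \emph{after} choosing $\epsilon_m$, there is no way to guarantee $L_m\epsilon_m\to 0$; the dependence $\epsilon_m\mapsto\sigma_m\mapsto L_m$ is circular, and nonwandering by itself gives no quantitative control on $\sigma_m$.

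Two clean ways to repair this. (i) For the density conclusion one does not need the full strength of Lemma~\ref{right-horocyclic} with a single $z$ and all $\alpha$: the proof of Proposition~\ref{right_iff_dense} (the implication ``right horocyclic $\Rightarrow$ dense'') only uses one fixed cone $\mathcal C(\tilde v,2)$, so taking any fixed $\sigma_0>\sinh(2)$ and $z:=h^{\sigma_0}y$ already furnishes the needed data $(t_n,v_n,z)$, and the gluing argument there produces a periodic vector $p\in\overline{W^{su}_+(v)}$; since $\Lambda_\Gamma=S^1$, Proposition~\ref{periodic} applies vacuously to $p$ and Fact~\ref{open} finishes. This is precisely your sketched alternative route, which is correct and is the one you should have made primary. (ii) If one insists on a single $z$ working for all $\alpha$, replace the nonwandering argument by the observation that the omega-limit set of $(g^{-t}v)_{t\ge 0}$ is compact and $(g^t)$-invariant, hence contains a $(g^t)$-recurrent point $y'$; then $y'^-\in\Lambda_{\mathrm{rad}}\subset\Lambda_{\mathrm{hor}}$, so $(h^{\sigma}y')_{\sigma>0}$ stays in a compact set, and any subsequential limit $z^*$ of $h^{\sigma_m}y'$, $\sigma_m\to+\infty$, does the job by a harmless diagonal extraction.
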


In \cite{SScha}, in the case of an abelian cover of a compact surface 
(also a surface of the first kind), 
we proved the equidistribution, and therefore the density of 
all positive half-horocyclic orbits $(h^sv)_{s\ge 0}$ of vectors $v$ 
whose asymptotic cycle is not maximal.

\begin{question}\rm It would be interesting to understand completely 
the behaviour of  half-horocycles. For example, 
\begin{enumerate}
\item On a surface of the first kind ($\Lambda_\Gamma=S^1$), are all horospherical vectors also right horocyclic vectors 
(generalization of Hedlund's theorem) ? Or can we find a counterexample ? 
\item On a surface of the second kind, 
can we construct counterexamples to Hedlund's theorem? 
Or sufficient conditions to be right horocyclic ? 
\end{enumerate}
\end{question}

\section{Proof of Theorem \ref{equidistribution}} \label{Proof-of-equidistribution}

In this section, $S$ is a nonelementary geometrically finite surface. 

\subsection*{Measures}

Let $\delta_{\Gamma}$ be the critical exponent of $\Gamma$, defined by
 $\delta_{\Gamma}:=\limsup_{T\to\infty}\#\{\gamma\in \Gamma, d(o,\gamma.o)\le T\}$. 
The well known Patterson construction provides a {\em conformal density}
 of exponent $\delta_{\Gamma}$ on $S^1$, 
that is a collection $(\nu_x)_{x\in\D}$ of  measures, 
supported on $\Lambda_{\Gamma}\subset S^1$, s.t. $\nu_o(S^1)=1$, $\gamma_*\nu_x=\nu_{\gamma.x}$ for all $\gamma\in\Gamma$, 
and $\frac{d\nu_x}{\d\nu_y}(\xi)=\exp(-\delta_{\Gamma}\beta_{\xi}(x,y))$. 

The Patterson-Sullivan measure $m^{ps}$ on  $T^1 S$, or Bowen-Margulis measure,  
is defined locally as the product 
$$
dm^{ps}(v)=
\exp\left(\delta_{\Gamma}\beta_{v^-}(o,\pi(v))+\delta_\Gamma\beta_{v^+}(o,\pi(v))\right)
d\nu_o(v^-)d\nu_o(v^+)dt
$$
in the coordinates $\Omega\simeq \Gamma\backslash (\Lambda_{\Gamma}^2 \times\R)$.

Under our assumptions on $S$, it is well known \cite{Sullivan} 
that the Bowen-Margulis measure is finite and ergodic\footnote{In fact, this result is false in general in 
variable negative curvature, and the assumption ($*$) 
added in section 5 ensures finiteness and ergodicity of this measure },
that there exists a unique conformal density of exponent $\delta_\Gamma$,
 that all measures $\nu_x$ are nonatomic and give full measure to the radial limit set. 
Moreover, the Bowen-Margulis-Patterson-Sullivan measure is the measure 
of maximal entropy of the geodesic flow,
 and it is fully supported on the nonwandering set $\Omega$ of the geodesic flow.

Denote by $\mu^{ps}_{H^+}$ the conditional measure of $m^{ps}$ 
on the strong unstable horocycle $H^+(u)=(h^su)_{s\in\R}$. 
It satisfies
 $d\mu^{ps}_{H^+}(v)=\exp\left(\delta_{\Gamma}\beta_{v^+}(o,\pi(v))\right)\,d\nu_o(v^+)$. 
To the measure $m^{ps}$ is also associated a {\em transverse measure} 
invariant by the horocyclic foliation, 
that is a collection $(\mu_T)$ of measures on all transversals $T$ to the
strong unstable foliation, 
invariant by all holonomies of the foliation. 

The classification of ergodic invariant measures for the horocyclic flow 
is well known (\cite{Burger},  \cite{Roblin}). 
Except the  probability measures supported on  periodic horocycles, 
and the infinite measures supported on  wandering horocycles, 
{\em there is a unique ergodic invariant measure fully supported in the
nonwandering set} 
$
{\mathcal
  E}\simeq \Gamma\backslash (\Lambda_\Gamma\times S^1\times \R)
$ of $(h^s)_{s\in\R}$. 
It is an infinite measure, defined locally by
$$
dm(v)=ds(v)\exp\left(\delta_{\Gamma}\beta_{v^-}(o,\pi(v))\right)d\nu_o(v^-)dt,
$$ 
where $ds(v)$ denotes the natural 
Lebesgue measure on $(h^s v)_{s\in\R}$ associated with the parametrization by $(h^s)$.

\subsection*{Sketch of the proof}

The strategy of the proof is exactly the same as in \cite{Scha1} and \cite{Scha2}. 
We consider 'one-sided versions' of all results of these articles. 
Due to the lengths of the proofs of technical results in \cite{Scha1}, 
we just recall the main arguments, and point out the few differences. \\

The main lines of the proof are as follows. 
We do not prove directly equidistribution of horocyclic orbits to the 
unique ``interesting'' ergodic invariant measure, because this measure is infinite. 
We consider auxiliary averages on horocycles. Using classical arguments 
(tightness in theorem \ref{nondivergence} and classification of invariant measures 
due to Burger \cite{Burger} and Roblin \cite{Roblin}), we prove equidistribution 
of these auxiliary averages towards the {\em finite} Bowen-Margulis measure 
(theorem \ref{equidistribution_to_Patterson_Sullivan}). 
We deduce then theorem \ref{equidistribution} from the preceding. \\

Let $\psi:T^1 S\to\R$ be a continuous compactly supported map. Denote by
$(h^su)_a^b$ the segment of orbit $(h^su)_{a\le s\le b}$. 
 Consider the following averages\,:
$$
M_{r,u}^+(\psi)=
\frac{1}{\mu^{ps}_{H^+}((h^su)_{0}^{R})}\int_{(h^su)_{0}^{R}}\psi(v)\,d\mu_{H^+}^{ps}(v)\,.$$
These averages are supported on $\Omega$. 
We prove 

\begin{thm}\label{equidistribution_to_Patterson_Sullivan}
 Let $S$ be a nonelementary geometrically finite hyperbolic surface, 
and $u\in{\mathcal E}\subset T^1 S$. 
If the positive orbit $(h^su)_{s\ge 0}$ is dense in $\Omega$, then it is equidistributed\,:
for all $\psi:T^1 S\to\R$ continuous with compact support, we have
$$
M_{r,u}^+(\psi)\to \int_{T^1 S}\psi\,dm^{ps}\,,\quad\mbox{when} \quad r\to \infty.
$$
\end{thm}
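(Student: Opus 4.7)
The approach is to prove the one-sided analogue of each of the three main ingredients used for symmetric averages in \cite{Scha1,Scha2}: (1) tightness of the family $\{M_{r,u}^+\}_{r>0}$, (2) $h^s$-invariance of any weak-$*$ limit, and (3) identification of the limit as $m^{ps}$ via Burger-Roblin. All three arguments rest on the fact that $\mu^{ps}_{H^+}((h^su)_0^R)\to\infty$ as $R\to\infty$, which follows from density of $(h^su)_{s\ge 0}$ in $\Omega$ (equivalently, from $u$ being a right horocyclic vector via proposition \ref{right_iff_dense}).

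First I would establish tightness. This is Dani-style nondivergence in the cusps: since $m^{ps}$ is finite and each cusp carries small PS-mass, a quantitative argument based on the parabolic structure shows that the fraction of $(h^su)_0^R$ lying deep in a cusp, weighted by $\mu^{ps}_{H^+}$, is uniformly small in $R$. The argument in \cite{Scha1} is local, concerning only how a horocycle enters and exits one cusp, so it transfers verbatim to the one-sided case. Next, to establish $h^s$-invariance, I use that $\mu^{ps}_{H^+}$ is itself $h^s$-invariant along the horocycle; for fixed $t\in\R$ the telescoping
\[
M_{r,u}^+(\psi\circ h^t)-M_{r,u}^+(\psi)=\frac{1}{\mu^{ps}_{H^+}((h^su)_0^R)}\left(\int_{(h^su)_R^{R+t}}\psi\,d\mu^{ps}_{H^+}-\int_{(h^su)_0^t}\psi\,d\mu^{ps}_{H^+}\right)
\]
has numerator bounded uniformly in $R$ on $\supp\psi$ while the denominator tends to infinity, so any weak-$*$ limit $\mu$ of $\{M_{r,u}^+\}$ is a probability $h^s$-invariant measure.

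The remaining step is to identify $\mu=m^{ps}/m^{ps}(T^1S)$. Tightness and the density of $(h^su)_{s\ge 0}$ force $\supp\mu\subset\Omega$, and by the Burger-Roblin classification \cite{Burger,Roblin}, the ergodic decomposition of any such $h^s$-invariant probability involves only the normalized BM measure and measures carried by periodic horocycles. The density hypothesis rules out the periodic components: a positive atomic piece on a periodic horocycle $P$ would require the sub-segments of $(h^su)_0^R$ that enter a fixed small neighborhood of $P$ to capture a positive fraction of the total $\mu^{ps}_{H^+}$-mass for arbitrarily large $R$, whereas density spreads the orbit across every open subset of $\Omega$, contradicting this concentration. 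I expect this last step to be the main obstacle: it requires quantitative control, as in \cite{Scha2}, on the PS-mass of horocycle excursions near a periodic orbit, extracted from the fact that each excursion past $P$ contributes only bounded PS-mass while the total mass of $(h^su)_0^R$ is unbounded by density. Once this is in place, the limit is uniquely $m^{ps}$, and the theorem follows.
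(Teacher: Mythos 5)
Your overall skeleton (tightness $\to$ invariance of limits $\to$ Burger--Roblin uniqueness) matches the paper's, and your tightness step and your reason for killing periodic components are essentially the arguments of \cite[Lemmas 3.5, 3.6]{Scha2}. The problem is in the middle step, and it is not a small one: you assert that $\mu^{ps}_{H^+}$ is $(h^s)$-invariant and deduce that every weak-$*$ limit of $M^+_{r,u}$ is a finite $(h^s)$-invariant measure. Neither statement is correct once $\delta_\Gamma<1$, which is exactly the geometrically finite infinite-volume regime the theorem is about. The conditional $\mu^{ps}_{H^+}$ of the Bowen--Margulis measure on a horocycle has a nonconstant density with respect to the arclength $ds$ (it is $\exp(\delta_\Gamma\beta_{v^+}(o,\pi(v)))\,d\nu_o(v^+)$, not $ds$); it is only quasi-invariant under $h^t$, so the displayed telescoping identity fails. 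In fact the telescoping cannot possibly give $h^s$-invariance of the limit, because the claimed limit $m^{ps}$ is itself not $(h^s)$-invariant: the unique ergodic $(h^s)$-invariant Radon measure of full support in $\mathcal E$ is $m$ (locally $ds\otimes\exp(\delta_\Gamma\beta_{v^-})\,d\nu_o(v^-)\,dt$), and $m\ne m^{ps}$ unless $\delta_\Gamma=1$. Worse, $m$ is infinite, so the Burger--Roblin list of ergodic $(h^s)$-invariant Radon measures contains no candidate for a probability measure of full support in $\Omega$ that gives zero mass to periodic horocycles; if your two intermediate claims were both true you would conclude that the limit is the zero measure, a contradiction with tightness.

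The correct way to phrase the invariance, and the one used in the paper (via \cite[Lemma 3.5]{Scha2}), is not invariance of the limit under $h^s$ but rather that any weak-$*$ limit of $(M^+_{r,u})$ factorizes locally as a \emph{transverse} measure, invariant under the holonomy of the strong unstable foliation, tensored with the conditionals $\mu^{ps}_{H^+}$ on the leaves. This is where the divergence $\mu^{ps}_{H^+}((h^su)_0^R)\to\infty$ (itself a consequence of density of the positive half-orbit, \cite[Lemma 4.2]{Scha2}) and the quasi-invariance of $\mu^{ps}_{H^+}$ under small holonomies enter. One then invokes Roblin's uniqueness for \emph{transverse} invariant measures of full support in $\mathcal E$ giving zero mass to periodic horocycles. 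Both $m$ and $m^{ps}$ share this transverse measure; gluing it back to $\mu^{ps}_{H^+}$ along leaves (as your $M^+_{r,u}$ does) yields $m^{ps}$, gluing it to $ds$ would yield $m$. So the fix is to replace your displayed identity and the $h^s$-invariance claim with the transverse product structure argument; the tightness and the "no mass on periodic horocycles" steps you give can then be kept.
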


As in \cite{Scha2}, we deduce easily theorem \ref{equidistribution} of theorem 
\ref{equidistribution_to_Patterson_Sullivan}.
In the proof of  theorem \ref{equidistribution_to_Patterson_Sullivan}, 
the difficult parts are the classification of $(h^s)$-invariant measures (see
\cite{Burger} and \cite{Roblin}) and  the following  tightness argument. 
 
\begin{thm}\label{nondivergence}
  Let $S$ be a nonelementary geometrically finite hyperbolic surface, 
and $u\in{\mathcal E}\subset T^1 S$. 
For all $\varepsilon>0$, there exist a compact set $K_{\varepsilon,u}\subset
\Omega$ 
and $r_0>0$ such that for $r\ge r_0$, 
$M_{r,u}^+(K_{\varepsilon,u})\ge 1-\varepsilon$. 
\end{thm}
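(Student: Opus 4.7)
The plan is to adapt the tightness argument for symmetric horocyclic averages from \cite{Scha1} to the one-sided setting. The crucial observation is that, since $S$ is geometrically finite, the only source of non-compactness in $\Omega$ is through the finitely many cusps. Concretely, I would decompose $S = C_0 \sqcup \bigsqcup_i F_i \sqcup \bigsqcup_j \mathcal{C}_j$ into a compact core, funnels (which are disjoint from $\Omega$), and cusps. For each cusp $\mathcal{C}_j$, let $U_j(n)$ denote the sub-horoball at depth $n$, centered at the parabolic fixed point $p_j$ of stabilizer $\langle\gamma_{p_j}\rangle$. It suffices to prove that for $n$ large and $r\ge r_0(n,u)$ one has $M^+_{r,u}\bigl(\bigsqcup_j U_j(n)\bigr) \le \varepsilon$, and then set $K_{\varepsilon,u} := \Omega \setminus \bigsqcup_j U_j(n)$.

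The first step is a local estimate near each parabolic point. Using Sullivan's refinement of the shadow lemma at a cusp, the Patterson density $\nu_o$ satisfies $\nu_o(B(p_j,e^{-t})) \asymp e^{-(2\delta_\Gamma - k_j)t}$, where $k_j$ is the rank of the parabolic (here $k_j=1$). Lifting $u$ to $\tilde u \in T^1\D$, each excursion of the horocycle $H^+(\tilde u)$ into a lift of $U_j(n)$ corresponds to a $\gamma_{p_j}$-translate of an arc on which $v^+$ is trapped near the $\Gamma$-translate of $p_j$ that is common to the nearby $\Gamma$-orbit points. Following the computation in \cite[Prop.~2.1]{Scha1} essentially verbatim, one obtains a constant $C$ and an exponent $\alpha = 2\delta_\Gamma - k_j > 0$ (as $\delta_\Gamma > k_j/2$ by Beardon's inequality for geometrically finite groups) such that for any horocyclic segment $I \subset H^+(\tilde u)$ lying in $U_j(n)$,
$$
\mu^{ps}_{H^+}(I) \le C\, e^{-\alpha n}\, \mu^{ps}_{H^+}\bigl(\pi_{\mathrm{hor}}(I)\bigr),
$$
where $\pi_{\mathrm{hor}}(I)$ is the shadow on the base horocycle $\partial U_j(0)$. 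Summing over excursions gives
$$
\mu^{ps}_{H^+}\bigl((h^s u)_0^R \cap U_j(n)\bigr) \le C\, e^{-\alpha n}\, \mu^{ps}_{H^+}\bigl((h^s u)_0^R\bigr).
$$

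The second step is a lower bound on the denominator that uses the hypothesis that $(h^s u)_{s\ge 0}$ is dense in $\Omega$. By Theorem \ref{density}, $u \in \mathcal{E}$ and $(g^{-t}\tilde u)_{t \ge 0}$ is not trapped in any funnel; in particular, iterating the mixing-type arguments of Roblin \cite{Roblin} applied one-sidedly, the positive orbit $(h^s u)_{0 \le s \le R}$ must accumulate enough $\mu^{ps}_{H^+}$-mass on the compact part $C_0 \cap \Omega$ that the total mass $\mu^{ps}_{H^+}((h^s u)_0^R) \to \infty$. A simple way to see this without invoking equidistribution (which is what we are trying to prove) is to note that the positive orbit enters a fixed open set $V \subset \Omega \cap C_0$ infinitely often by density, and each visit of duration at least $\eta$ contributes at least $c(V,\eta) > 0$ to the mass. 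Combining the two steps, $M^+_{r,u}\bigl(U_j(n)\bigr) \le C e^{-\alpha n}$ uniformly for $r$ large enough, and summing over the finitely many cusps yields the required tightness.

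The main obstacle is Step~2, the local estimate in each cuspidal neighborhood. Unlike the symmetric case treated in \cite{Scha1}, there is no a priori symmetry between positive and negative excursions, but this plays no role in the argument because the estimate is applied to each \emph{excursion} individually and the contributions are merely summed; orientation does not enter. A minor subtlety is to ensure that a horocyclic excursion which is cut in half by the endpoint $s = R$ or $s = 0$ does not destroy the bound, but this contributes a single boundary term which is absorbed by the growth of the denominator established in the preceding paragraph.
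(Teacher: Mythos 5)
Your overall strategy matches the paper's: decompose $\Omega$ modulo cusp neighborhoods, bound the mass of deep cusp excursions via a shadow lemma, and sum over excursions. However, there is a genuine gap exactly at the point you dismiss as ``a minor subtlety.'' The place where one-sidedness actually bites is the \emph{boundary excursion}: the excursion that is cut by the endpoint $s=R$. For a non-boundary excursion into a horoball $H_i$, the full incoming and outgoing segment lies in $(h^su)_0^R$, so the deep part at depth $N$ can be compared to the full excursion mass, which contributes to the denominator. For the boundary excursion $i_0(r)$, only the \emph{incoming} (left) half of the excursion is guaranteed to lie in $(h^su)_0^R$, while the deep part you are trying to bound straddles both sides of the vector $v_{i_0}$ pointing at the parabolic point. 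You cannot control this deep part by the incoming half alone using a symmetric shadow lemma, and your claim that the boundary term is ``absorbed by the growth of the denominator'' is unjustified: nothing in your argument rules out a single deep excursion being a bounded fraction of the total $\mu^{ps}_{H^+}((h^su)_0^R)$. The paper's fix is the \emph{half-shadow} estimate, Proposition~\ref{half_shadow}, together with the refinement~(\ref{couronne}): one shows that $\nu_o(V(o,\xi_i,t)^+)\asymp\nu_o(V(o,\xi_i,t))$ and that most mass in the half-shadow is concentrated in the outer annulus $V(o,\xi_i,t)^+\setminus V(o,\xi_i,t+s)^+$. This is what lets one bound the two-sided deep mass of the boundary excursion by the one-sided incoming annulus mass, giving the uniform bound $Ce^{-(2\delta_\Gamma-1)N}$ for that single term as well. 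Your assertion that ``orientation does not enter'' is therefore wrong: orientation enters precisely in the boundary term, and the half-shadow lemma is the piece of one-sided technology you are missing.

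A secondary, structural remark: your Step~2 (lower-bounding $\mu^{ps}_{H^+}((h^su)_0^R)\to\infty$ via density and visits to a fixed open set) is unnecessary for this theorem. The paper never needs a lower bound on the total mass for Theorem~\ref{nondivergence}: it bounds $M^+_{r,u}(\sqcup T^1 H_i^N)$ above by the ratio of the deep mass to the mass in $\sqcup T^1 H_i^0\cap(h^su)_0^R$ directly, which is a termwise ratio controlled by Lemma~\ref{uniformbound}, requiring only $u^-\in\Lambda_{\rm rad}$. This is why the paper can state the theorem for all $u\in\mathcal{E}$ and not only for those with dense positive orbit; the density hypothesis is reserved for Theorem~\ref{equidistribution_to_Patterson_Sullivan}, where $\mu^{ps}_{H^+}((h^su)_0^R)\to\infty$ really is needed. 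By invoking density here you are both proving a weaker statement than the paper and relying on an argument that still does not close the boundary-term gap.
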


\subsection*{Proof of theorem \ref{nondivergence}}
This is the only non immediate part. 
For simplicity, assume that  $S$ has exactly one cusp. 
(If it has no cusp, $\Omega$ is compact, so that  theorem \ref{nondivergence} is obvious.)
Denote by $(\xi_i)_{i\in\N}=\Gamma.\xi_1$ the $\Gamma$-orbit of parabolic limit points of
$\Lambda_{\Gamma}$. 
As $S$ is geometrically finite, there is a $\Gamma$-invariant 
family of disjoint horoballs $H_i$ of $\D$, based at $\xi_i$,  
such that $\sqcup_{i\in\N}H_i=\Gamma.H_1$, $\Gamma$ acts cocompactly on $(\Lambda_{\Gamma}^2\times\R)\setminus \cup_i T^1 H_i$. 
Assume that $H_1$ is the closest horoball to the origin $o$, 
that the distance from $o$ to $\partial H_1$ is bounded by the diameter 
of the compact part $C_0$ of $S$, and that the geodesic ray $[o\xi_1)$ 
does not intersect other horoballs $H_i$, $i\neq 1$. 
Let $\Pi$ be the subgroup of $\Gamma$ stabilizing $H_1$. 
Its critical exponent $\delta_{\Pi}$ is equal to $1/2$ on hyperbolic surfaces. 
Moreover, $1>\delta_{\Gamma}>\delta_{\Pi}$ for a nonlattice not
convexe-compact geometrically finite group. (\footnote{Indeed, \cite{Peigne}
 as the surface has infinite volume, it contains a funnel. 
Let $\xi\notin\Lambda_\Gamma$, and $p\notin\Gamma$ a parabolic isometry fixing $\xi$. 
Using the divergence of $\Gamma$ and \cite[Prop.2]{DOP}, we obtain $1\ge \delta_{<p>*\Gamma}>\delta_\Gamma$. 
As all parabolic subgroups of $\Gamma$ are divergent, the same proposition \cite[Prop.2]{DOP} 
gives $\delta_\Gamma>\delta_\Pi$ for all $\Pi$ parabolic subgroups of $\Gamma$. 
  })

Let $o\in\D$ be fixed outside all horoballs $H_i$,  $\xi\in S^1$ and $t\ge 0$.
Let $\xi(t)$ be the point of the geodesic ray $[o\xi)$ at distance $t$ of $o$, and 
define the set $V(o,\xi,t)$ as the set of points $\eta\in S^1$ 
whose projection on $[o\xi)$ is at distance at least $t$ of $o$. 
By abuse of notation, we call such sets {\em shadows}, 
because they are comparable to Sullivan's shadows (it is a classical fact, see
for example \cite{Scha1}). 
We denote by $V(o,\xi,t)^+$ (resp. $V(o,\xi,t)^-$)  the {\em positive } (resp. negative) 
{\em half-shadow}, that is the subset of points of $V(o,\xi, t)$ that
are greater 
(resp. less) than $\xi$ in the counterclockwise order. 

If $H_i$ is a horoball based at $\xi_i$, denote by $s_i$ 
the distance between $o$ and $\partial H_i$, 
or in other words the instant when the geodesic ray $[o\xi_i)$ enters in $H_i$.
Notation $a(t)=B^{\pm 1}$ means that $\frac{1}{B}\le a(t)\le B$  for all $t\ge
0$. 

Following  exactly \cite[prop. 3.4]{Scha2}, we get : 

\begin{prop}\label{half_shadow} Let $S$ be a geometrically finite hyperbolic surface. 
There is a constant
 $B>0$ such that for all $\xi_i\in\Lambda_p$ and all $t\ge s_i$,
 where $s_i=\beta_{\xi_i}(o,\partial H_i)$, we have
$$
\nu_o(V(o,\xi_i, t)^+)=B^{\pm 1}e^{-\delta_{\Gamma}t}e^{(1-\delta_{\Gamma})(t-s_i)}\,.
$$
\end{prop}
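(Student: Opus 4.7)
The claim is the one-sided version of the Stratmann--Velani global measure formula for the Patterson--Sullivan density near a parabolic fixed point, and the plan is to adapt the proof of \cite[Prop.~3.4]{Scha2} (which treated the full shadow), verifying that the same argument applies to each half-shadow with at most a bounded multiplicative loss.

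First I would pass to the upper half-plane model that sends $\xi_i$ to $\infty$ and the horoball $H_i$ to $\{\mathrm{Im}\,z\ge 1\}$; then $o$ lies at Euclidean height $y_o=e^{-s_i}$ on the vertical geodesic through $\xi_i$. A direct computation with semicircular geodesics identifies $V(o,\xi_i,t)^+$, up to bounded ambiguity in $t$, with the tail $\{x\in\Lambda_\Gamma\cap\R:x\gtrsim e^{t-s_i}\}$ of limit points on the positive side of $\xi_i$. Using the $\Gamma$-equivariance of $(\nu_x)$ to write $V(o,\xi_i,t)^+=\gamma_i\cdot V(\gamma_i^{-1}o,\xi_1,t)^+$ with $\gamma_iH_1=H_i$, one may further reduce to the single reference horoball $H_1$ at the cost of a varying observer $\gamma_i^{-1}o$, still projecting into a compact part of $S$ and lying at distance $s_i$ from $\partial H_1$.

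Second, I would estimate the Patterson--Sullivan mass of this tail. The maximal parabolic subgroup $\Pi_1=\langle p\rangle$ fixing $\xi_1$ has critical exponent $\delta_{\Pi_1}=\tfrac12$, and the conformal transformation rule $d\nu_{p^n o}/d\nu_o(\eta)=e^{-\delta_\Gamma\beta_\eta(p^n o,o)}$, combined with the parabolic distance identity $d(o,p^n o)=2\log|n|+2s_i+O(1)$, gives a pointwise description of $\nu_o$ on a neighborhood of $\xi_1$. Applying the shadow lemma in its Stratmann--Velani refinement valid for orbit points along the horosphere $\partial H_1$, together with a Vitali-type covering of the tail by shadows of carefully chosen orbit representatives, one arrives at
\[
\nu_o\bigl(V(o,\xi_i,t)^+\bigr)\ \asymp\ e^{-\delta_\Gamma t}\,e^{(2\delta_{\Pi_1}-\delta_\Gamma)(t-s_i)}\ =\ e^{-\delta_\Gamma t}\,e^{(1-\delta_\Gamma)(t-s_i)},
\]
convergence being secured by the strict inequality $\delta_\Gamma>\delta_{\Pi_1}=\tfrac12$ recorded in the footnote (and genuinely needed: at a lattice, $\delta_\Gamma=1$ would make the second exponent vanish, as it should).

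The main obstacle is the uniformity of the constant $B$ across the countably infinite orbit $\Gamma\cdot\xi_1$ of parabolic limit points and across all $t\ge s_i$. This is handled by the equivariance reduction to $H_1$ above: the shadow-lemma constants depend only on the geometry of the single horoball $H_1$ and on the projection of $\gamma_i^{-1}o$ to the compact core, not on the particular $\xi_i$. Finally, the passage from the full shadow $V(o,\xi_i,t)$ to the positive half-shadow $V(o,\xi_i,t)^+$ costs at most a factor of two in $B$: the $\Pi_1$-parabolic decomposition of $\nu_o$ around $\xi_i$ treats the two sides symmetrically, so no reflection symmetry (which a general $\Gamma$ may lack) is ever invoked.
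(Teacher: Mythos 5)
Your proposal is correct and reconstructs essentially the same argument the paper invokes by reference: the paper's entire proof is ``Following exactly \cite[prop.~3.4]{Scha2}'', and the chain you describe — normalize $\xi_i$ to $\infty$ and $H_i$ to $\{\mathrm{Im}\,z\ge 1\}$, identify $V(o,\xi_i,t)^+$ with a one-sided tail at Euclidean scale $e^{t-s_i}$, reduce by $\Gamma$-equivariance to the reference horoball with observer $\gamma_i^{-1}o$ in the compact core, sum shadow-lemma estimates over the $\Pi_1$-orbit using $d(o,p^no)=2\log|n|+2s_i+O(1)$ and $\delta_\Gamma>\delta_{\Pi_1}=\tfrac12$ — is precisely the content of that reference. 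The observation that the half-shadow inherits the full-shadow asymptotics because $d(o,p^n o)=d(o,p^{-n}o)$ makes the two sides of the geometric series comparable (your ``factor of two'', which more precisely is a bounded multiplicative loss rather than an exact $2$) is the only adaptation required and is exactly what lets the paper carry over \cite{Scha2} verbatim.
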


We will need the following immediate refinement of the above statement. 
Let $s\ge 0$ be large enough so that $Be^{(1-2\delta_{\Gamma})s}\le \frac{1}{2B}$. 
We have
\begin{eqnarray}\label{couronne}\nu_o(V(o,\xi_i, t)^+\setminus V(o,\xi_i,
  t+s)^+)
=(2B)^{\pm 1}e^{-\delta_{\Gamma}t}e^{(1-\delta_{\Gamma})(t-s_i)}\,.
\end{eqnarray}
In other words, 'most' of the measure is in the boundary of the shadows. Of
course, the same result holds for $V(o,\xi_i, t)^-\setminus V(o,\xi_i,
t+s)^-$. It also holds, with a different constant, for
$V(o,\xi_i,t)^+\setminus V(o,\xi_i,t+s)$. 

Consider now a horocycle $(h^su)_{s\in\R}$, with $u^-\in\Lambda_{rad}$, 
$u\notin H_i$ for all $i\in\N$.
Denote by $v_i=h^{\sigma_i}u,\,i\in\N$ the unique vector of this horocycle 
such that $v_i^+=\xi_i$, 
and $h_i$ the height of $v_i$ in the horoball $H_i$ (i.e. the unique real
number 
s.t. $g^{-h_i}v_i\in T^1\partial H_i$).
 As noticed in \cite[lemma 4.3]{Scha2}, there is a (small) constant $\alpha>0$, s.t. 
$$
(h^sv_i)_{|s|\le e^{(h_i-\alpha)/2}}\subset (h^su)_{0}^{+\infty}\cap T^1
 H_i\subset (h^sv_i)_{|s|\le e^{h_i/2}}\,.
$$

For all $i\in\N$, define the horoball $H_i^N\subset H_i$ as the horoball s.t. 
the distance between $\partial H_i$ and $\partial H_i^N$ is equal to $N$. 
Using the above proposition, we prove that
\begin{lem}\label{uniformbound} Let $u\in {\mathcal E}$, with $u^-\in\Lambda_{rad}$.
 There is a
  constant $C>0$ such that   
$$
\frac{\mu_{H^+}^{ps}((h^s v_i)_{|s|\le e^{(h_i-N)/2}})}{\mu_{H^+}^{ps}((h^s
  v_i)_{e^{(h_i-N)/2}\le s\le e^{h_i/2}})} \le C e^{-(2\delta_\Gamma-1)N}\to 0 \quad
\mbox{when}\quad N\to\infty$$
 uniformly in $i\in\N$. 
\end{lem}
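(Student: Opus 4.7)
The plan is to translate the ratio into a ratio of half-shadow $\nu_o$-measures at $\xi_i$ via the endpoint map $s\mapsto(h^sv_i)^+$, then apply Proposition~\ref{half_shadow} and its refinement~\eqref{couronne}. This is the one-sided analogue of the estimate carried out for symmetric arcs in \cite{Scha2}.

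First I would identify the horocyclic segments with portions of $W^{su}(v_i)$ sitting inside nested horoballs. Using the identity $g^t\circ h^s=h^{se^t}\circ g^t$ and the definition of $h_i$, the segment $(h^sv_i)_{|s|\le e^{(h_i-N)/2}}$ is, up to the bounded error $\alpha$ recalled in the paragraph preceding the lemma, exactly the trace $W^{su}(v_i)\cap H_i^N$, while the one-sided segment $(h^sv_i)_{e^{(h_i-N)/2}\le s\le e^{h_i/2}}$ is the positive half of $W^{su}(v_i)\cap(H_i\setminus H_i^N)$. Under the endpoint map these correspond respectively to a subset of $S^1$ comparable to the full shadow $V(o,\xi_i,s_i+N)$ and to the half-corona $V(o,\xi_i,s_i)^+\setminus V(o,\xi_i,s_i+N)^+$; here I use the standard Sullivan-type comparison between shadows of horoballs from $\pi(v_i)$ and from $o$, valid since $o$ is fixed outside all $H_j$.

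Next I would expand the Patterson--Sullivan conditional measure and pull out a common factor. Writing
\[
\exp(\delta_\Gamma\beta_{v^+}(o,\pi(v)))=\exp(\delta_\Gamma\beta_{\xi_i}(o,\pi(v_i)))\cdot\exp(\delta_\Gamma(\beta_{v^+}(o,\pi(v))-\beta_{\xi_i}(o,\pi(v_i)))),
\]
the first exponential is common to both arcs and cancels in the ratio, while the second is uniformly bounded on both arcs (its size depends only on the geometry of the horoball $H_i$, which as a $\Gamma$-translate of $H_1$ has fixed geometry). The ratio is therefore comparable to $\nu_o(V(o,\xi_i,s_i+N))/\nu_o(V(o,\xi_i,s_i)^+\setminus V(o,\xi_i,s_i+N)^+)$. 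Proposition~\ref{half_shadow} applied at $t=s_i+N$ (and doubled to cover both half-shadows) gives numerator $\asymp e^{-\delta_\Gamma s_i}e^{(1-2\delta_\Gamma)N}$, and \eqref{couronne} with $t=s_i$, $s=N$ gives denominator $\asymp e^{-\delta_\Gamma s_i}$. Dividing yields the bound $Ce^{-(2\delta_\Gamma-1)N}$, tending to $0$ since $\delta_\Gamma>1/2$ (from the recalled inequality $1>\delta_\Gamma>\delta_\Pi=1/2$), with $C$ uniform in $i$ because all $H_i$ are $\Gamma$-equivalent and the constant $B$ of Proposition~\ref{half_shadow} is independent of $i$.

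The main obstacle is establishing the geometric dictionary that converts the horocyclic thresholds $e^{(h_i-N)/2}$ and $e^{h_i/2}$ into shadow depths $s_i+N$ and $s_i$, together with the uniform bound on the Busemann correction factor. The first is a hyperbolic-geometry computation rooted in the contraction identity $g^th^s=h^{se^t}g^t$ and the position of $v_i$ inside $H_i$ at height $h_i$; the second relies on $o$ being chosen outside all horoballs, which ensures that as $v$ varies along either arc the difference $\beta_{v^+}(o,\pi(v))-\beta_{\xi_i}(o,\pi(v_i))$ stays bounded by a constant depending only on the fixed geometry of $H_1$.
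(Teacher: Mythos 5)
Your high-level strategy (translate both horocyclic arcs into $\nu_o$-measures of shadows, apply Proposition~\ref{half_shadow} and the refinement~\eqref{couronne}) is the one the paper uses, but the geometric dictionary from arc lengths to shadow depths is wrong in a way that also hides why the hypothesis $u^-\in\Lambda_{\rm rad}$ is there.

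First, the shadow parameter. Because of the conjugation identity $g^t\circ h^s=h^{se^t}\circ g^t$, the horocyclic parameter scales as the \emph{exponential of half} the geodesic depth: the paper's inclusion $(h^s v_i)_{|s|\le e^{(h_i-\alpha)/2}}\subset T^1H_i\subset (h^sv_i)_{|s|\le e^{h_i/2}}$ already encodes the factor $1/2$. A direct computation in the upper half-plane (normalize $\xi_i=\infty$, $\partial H_i=\{y=1\}$, $u^-=0$, $\pi(v_i)=(0,e^{h_i})$, $o$ at height $e^{-s_i}$) shows the arc of the horocycle at depth $\ge N$ in $H_i$ has endpoints $v^+\approx \pm e^{(N+h_i)/2}$, which under $\eta\mapsto V(o,\xi_i,t)=\{|\eta|\ge e^{t-s_i}\}$ corresponds to the shadow with $t=s_i+(N+h_i)/2$, \emph{not} $s_i+N$. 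In particular your denominator shadow parameter should be $s_i+h_i/2$ rather than $s_i$: the endpoint set of the whole arc inside $H_i$ is \emph{not} the full shadow $V(o,\xi_i,s_i)$ of $H_i$, but a much smaller set. The $h_i/2$ offset cancels in the ratio, but the missing factor $1/2$ in the $N$-dependence does not; carried through Proposition~\ref{half_shadow} it changes the exponent, so the apparent agreement of your final formula with the lemma's stated exponent comes from the compensating error, not from a correct derivation. What the paper actually does is apply $g^{-T_i}$ to $v_i$, where $T_i\ge h_i/2$ is a time at which $g^{-T_i}v_i$ lies in a lift of the compact part $C_0$ (this is precisely where $u^-\in\Lambda_{\rm rad}$ is used); after conjugation the two arcs have bounded length, so they sit near $\gamma.o$, the measure $\mu^{ps}_{H^+}$ on them \emph{is} comparable to a $\nu_{\gamma.o}$-shadow measure up to a uniform constant, and the shadow parameters $T_i-h_i/2+N/2$ and $T_i-h_i/2$ then fall out of the arc lengths $e^{(h_i-N)/2-T_i}$ and $e^{h_i/2-T_i}$.

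Second, your claim that the Busemann correction $\exp\bigl(\delta_\Gamma(\beta_{v^+}(o,\pi(v))-\beta_{\xi_i}(o,\pi(v_i)))\bigr)$ is uniformly bounded on both arcs ``because $H_i$ is a $\Gamma$-translate of $H_1$'' is not a proof. The basepoints $\pi(v)$ range over points as far as depth $h_i$ into the cusp, so $d(o,\pi(v))$ is unbounded, and a priori the Busemann cocycle could vary by an amount of order $h_i$. In fact one can show by an explicit computation that $\beta_{v^+}(o,\pi(v))$ stays within $O(1)$ of $h_i+s_i$ along the arc, but this requires an actual argument (it is a manifestation of the conformal structure, essentially that the push-forward of $\mu^{ps}_{H^+}$ under the endpoint map is comparable to $e^{\delta_\Gamma(h_i+s_i)}\nu_o$). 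The paper avoids having to prove this entirely: once you pass to $g^{-T_i}v_i\in\Gamma\tilde{C_0}$, the arcs are at bounded distance from $\gamma.o$ and the Busemann factor is bounded for trivial reasons. Your proposal never performs this reduction, which is why the hypothesis $u^-\in\Lambda_{\rm rad}$ plays no role in your argument --- a sign that a key step has been skipped.
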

This lemma says that the 'time' (measured with the measure $\mu^{ps}_{H^+}$) 
spent by a horocycle in a horoball $H_i^N$ is
small compared to the 'time' needed to go from $\partial H_i^N$ to $\partial H_i^0$. 

Recall that as $S$ is a geometrically finite hyperbolic surface,
$\delta_\Gamma>\delta_\Pi=1/2$. %({\bf Add reference})
\begin{proof} We only sketch the proof, and refer to \cite{Scha1}. Let $C_0$
  be the compact part of $S$, and $\tilde{C_0}$ a connected lift to $\D$
  containing $o$. 

As $u^-\in\Lambda_{rad}$, for all $i\in\N$, there exists $T_i\ge h_i/2$
s.t. $g^{-T_i}v_i\in\Gamma\tilde{C_0}$. By definition of $\mu_{H^+}^{ps}$ we get
$$
\frac{\mu_{H^+}^{ps}((h^s v_i)_{|s|\le e^{(h_i-N)/2}})}{\mu_{H^+}^{ps}((h^s
  v_i)_{ e^{(h_i-N)}\le s\le e^{h_i/2}})}= 
\frac{\mu_{H^+}^{ps}((h^s g^{-T_i}v_i)_{|s|\le e^{(h_i-N)/2-T_i}})}{\mu_{H^+}^{ps}((h^s
 g^{-T_i} v_i)_{e^{(h_i-N)/2-T_i}\le s\le e^{h_i/2-T_i}})}\,.
$$
As the distance from $\pi(g^{-T_i}v_i)$ to $\gamma.o$ is less than the
diameter of the compact part $C_0$, up to uniform
constants, the above quantity is uniformly close to 
$$
\frac{\nu_{\gamma.o}(V(\gamma.o,\xi_i,T_i-h_i/2+N/2))}
{\nu_{\gamma.o}\left(V(\gamma.o,\xi_i,T_i-h_i/2)^+\setminus
V(\gamma.o,\xi_i,T_i-h_i/2+N/2)^+\right)}\,.
$$  for some $\gamma\in \Gamma$. 
Proposition \ref{half_shadow} and estimate (\ref{couronne}) give the desired
control. 
\end{proof}

Following \cite{Scha1}, we can now prove theorem \ref{nondivergence}. Define
$\widetilde{K_{\varepsilon,u}}:=\Lambda_\Gamma^2\times \R\setminus
\sqcup_{i\in\N} T^1 H_i^N$, for $N=N(\varepsilon)$ large enough. 
Denote by $I_{u,r,N}$ the set of $i\in\N$ such that $(h^su)_{0}^R$ intersects
the unit tangent bundle $T^1 H_i^N$ to the shrinked horoball $H_i^N$ at height $N$ inside $H_i$. 
As $(h^su)_{s\in\R}$ is one-dimensional, and $u\in T^1C_0$,  
for all $j\in I_{u,r,N}$ except
maybe one boundary term denoted by $i_0=i_0(r)$, we have 
$(h^su)_0^r\cap T^1 H_i^N=(h^su)_{s\in\R}\cap
T^1 H_i^N$. 
 We deduce
\begin{eqnarray*}
M_{r,u}^+(\sqcup_{i\in\N} T^1 H_i^N)&\le& 
\frac{\mu_{H^+}^{ps}(\sqcup_{i\in I_{u,r,N}}(h^su)_0^r\cap T^1 H_i^N)}{\mu_{H^+}^{ps}(\sqcup_{i\in I_{u,r,N}}(h^su)_0^r\cap T^1 H_i^0)}\\
&\le &\frac{\sum_{j\in I_{u,r,N}\,, \,j\neq i_0(r)}\mu_{H^+}((h^sv_i)_{|s|\le
      e^{(h_i-N)/2}}) +\mu_{H^+}((h^s v_{i_0})_{|s|\le e^{(h_{i_0}-N)/2}})
    }
{\sum_{j\in I_{u,r,N},\,j\neq i_0(r)} 
\mu_{H^+}((h^sv_i)_{|s|\le    e^{h_i/2}})+\mu_{H^+}((h^s v_{i_0})_{-e^{h_i/2}}^{ -e^{(h_i-N)/2}})}
\end{eqnarray*}
Lemma \ref{uniformbound} allows to conclude that for $N$ large enough,
uniformly in $r\ge 0$, the measure $M_{r,u}^+(\Gamma.T^1 H_1^N)$ is less than
$\varepsilon$. 

Comparing with \cite{Scha1}, the only difference is  that here there
is only one boundary term $i_0(r)$.

\subsection*{Proof of theorem  \ref{equidistribution_to_Patterson_Sullivan}}

We follow \cite{Scha2}. Thanks to theorem \ref{nondivergence}, 
all limit points of $(M_{r,u}^+)_{r\ge 0}$ are probability measures. 
As in \cite[lemma 3.6]{Scha2}, we observe that such a limit gives measure 
zero to the set of periodic horocycles. 

Moreover, it is not difficult to see  \cite[lemma 3.5]{Scha2} 
that a limit point of the family $(M_{r,u}^+)_{r\ge 0}$
 when $r\to \infty$ can be written as the product 
of a transverse invariant measure to the strong unstable foliation 
by the measure $\mu_{H^+}^{ps}$. 
The only fact to check is that $\mu^{ps}_{H^+}((h^su)_{0\le s\le R})\to
+\infty$ as $R\to\infty$, and it can be done by the argument of  \cite[Lemma 4.2]{Scha2}, 
usin the fact that the positive half-horocycle $(h^su)_{s\ge 0}$ is dense. 

The uniqueness (\cite{Roblin}) of a transverse measure of full support in the
nonwandering set ${\mathcal E}$ giving measure $0$ to periodic horocycles
allows to conclude the proof.

\subsection*{Proof of theorem \ref{equidistribution}} 
As $(h^su)_0^{+\infty}$ is dense, and therefore recurrent, 
the proof is exactly the same as in
\cite{Scha2}. 
The idea is to restrict the attention to a small flow box $B$, and to compare
the transverse measures on a transversal $T$ of  $B$ induced on one side by the
averages $M_{r,u}^+(\psi)$ and on the other side by  ratios 
$\frac{\int_0^r\psi\circ h^s u\,ds}{\int_0^r{\bf 1}_B\circ h^s u\,ds}$.

\section{Surfaces with variable negative curvature}

Most results proved here extend to surfaces of variable negative curvature. 
More precisely, we assume that all sectional curvatures are pinched between 
two negative constants. 
Some definitions of the notions used here differ sightly, and we refer to the 
preliminary sections of \cite{Scha1} or \cite{Scha2} for details. 
The main difference is that there is no canonical parametrization of horocycles
 by a nice horocyclic flow, even if it is possible to define such a flow 
(see \cite{Marcus} ).

The motivated reader can check that the proof of theorem \ref{density} an all results of section 3 
extend verbatim to the situation of pinched negatively curved surfaces.\\
%, so that we can state

%\noindent 
%{\bf Theorem \ref{density} bis}
%{\em Let $S$ be a pinched negatively curved geometrically finite 
%  surface. Let $u\in T^1 S$ be s.t. its unstable horocyclic orbit $(h^su)_{s\in\R}$ is dense in
%  $\Omega$. Then $(h^s u)_{s\ge 0}$ is dense in $\Omega$ iff $u^-$ is not the
%  first endpoint of an interval of $\partial \tilde{S}\setminus \Lambda_{\Gamma}$, where $\partial\tilde{S}$ is the boundary at infinity of $\tilde{S}$.}\\%

Concerning the equidistribution, we need to be more careful. 
We add an assumption, denoted by $(*)$ in \cite{Scha1} 
and \cite{Scha2}, which allows to control the geometry of the cusps, 
and ensures in particular that the Bowen-Margulis measure is finite. 
With this restriction, theorem \ref{equidistribution} is valid on pinched 
negatively curved geometrically finite surfaces.

%\end{document}


\begin{thebibliography}{9999}
%\bibitem[{\bf BL1}]{BL1} M. Babillot and F. Ledrappier:  {\em
%Lalley's theorem on periodic orbits of hyperbolic flows}, Ergodic Th. \& Dynam. Syst. {\bf 18} (1998) 17--39.
%\bibitem[{\bf BL2}]{BL} M. Babillot and F. Ledrappier: {\em
% Geodesic paths and horocycle flows on Abelian covers},  Lie groups and ergodic theory (Mumbai, 1996),  1--32, Tata Inst. Fund. Res. Stud. Math. {\bf 14}, Tata Inst. Fund. Res., Bombay, 1998.

\bibitem[{\bf Bow}]{Bowditch} B. Bowditch {\em  Geometrical finiteness with variable negative curvature}, Duke math. J. {\bf 77} no.1 (1995), 229-274. 


%\bibitem[{\bf Bo1}]{Bo} R. Bowen:  {\em Symbolic dynamics for hyperbolic flows.}  Amer. J. Math {\bf 95} (1973), 429--460.
%\bibitem[{\bf Bo2}]{Bo2} R. Bowen: {\em Equilibrium states and the ergodic theory of Anosov diffeomorphisms}, Lecture Notes in Math. {\bf 470}, Springer (1975).
%\bibitem[{\bf Bou}]{Bourdon} M. Bourdon: {\em Actions quasi-convexes d'un
%groupe hyperbolique, flot g\'eod\'esique}, PhD thesis, Universit\'e d'Orsay,
%(1993).


\bibitem[{\bf BM}]{BM} R. Bowen and B. Marcus: {\em Unique ergodicity for horocycle foliations}, 
Israel J. Math. {\bf 26} (1977), 43--67.

%\bibitem[{\bf BS}]{BS} R. Bowen and C. Series: {\em Markov maps associated
%  with Fuchsian groups}, Publ. Math. IH\'ES {\bf 50} (1979), 153--170.

\bibitem[{\bf Bu}]{Burger} M. Burger: {\em Horocycle flows on geometrically finite surfaces}, 
Duke Math. J. {\bf 61} (1990), 779--803.

%\bibitem[{\bf CG}]{CG}   J.-P. Conze and Y. Guivarc'h: {\em Propri\'et\'e de droite fixe et fonctions propres des op\'erateurs de convolution}, S\'eminaire de Probabilit\'es, I (Univ. Rennes, 1976), Exp. No. 4. 22 pp. Dept. Math. Informat., Univ. Rennes, Rennes, 1976.

\bibitem[{\bf C}]{C} Y. Coudene: {\em A note on horospherical points} Ergodic Theory 
Dynam. Systems  {\bf 25}  (2005),  no. 3, 793--798. 

\bibitem[{\bf Da}]{Dalbo} F. Dal'bo {\em Topologie du feuilletage fortement
  stable}, Ann. Inst. Fourier (Grenoble) {\bf 50} (3) (2000), 981--993. 

\bibitem[{\bf DOP}]{DOP} F. Dal'bo, JP Otal, M. Peign\'e, {\em S\'eries de Poincar\'e des groupes g\'eom\'etriquement finis}, Israel J. Math. {\bf 118} (2000), 109-124. 

\bibitem[{\bf D}]{Dani} S.G. Dani {\em Invariant measures of horospherical flows on noncompact homogeneous spaces}, Invent. Math. {\bf 47} (1978), no. 2, 101-138.    

\bibitem[{\bf DS}]{DS} S.G. Dani and J. Smillie: {\em Uniform distribution 
of horocycle orbits for Fuchsian groups}, Duke Math. J. {\bf 51} (1984), 185--194.

\bibitem[{\bf E}]{Eberlein} P. Eberlein {\em Geodesic flows on negatively curved manifolds II}, Trans. Amer. Math. Soc. {\bf 178} (1973), 57-82.

\bibitem[{\bf F}]{F} H. Furstenberg: {\em The unique ergodicity of the horocycle flow}, 
Springer Lecture Notes {\bf 318} (1972), 95--115.

\bibitem[{\bf H}]{Hedlund} G.A. Hedlund {\em Fuchsian groups and transitive horocycles}, Duke Math. J. {\bf 2} (1936), 530-542.  


%\bibitem[{\bf K}]{K} V. Kaimanovich: {\em Ergodic properties of the horocycle flow and classification of Fuchsian groups}, J. Dynam. Control Systems {\bf 6} (2000), no. 1, 21--56.
%\bibitem[{\bf L}]{L}  S. P. Lalley: {\em  Renewal theorems in symbolic dynamics, with applications to geodesic flows, non-Euclidean tessellations and their fractal limits}, Acta Math. {\bf 163} (1989), no. 1-2, 1--55.
%\bibitem[{\bf LS1}]{LS} F. Ledrappier and O. Sarig: {\em Unique ergodicity for non-uniquely ergodic horocycle flows}, Disc. Cts. Dynam. Syst. {\bf 16} (2006), Katok Birthday Issue, 411--433.
%\bibitem[{\bf LS2}]{LS2} F. Ledrappier and O. Sarig: {\em Invariant measures for the horocycle flow on periodic hyperbolic surfaces},  Israel J. Math. {\bf 160} (2007) 281-317.
%\bibitem[{\bf LP}]{LP} V. Lin and Y. Pinchover: {\em Manifolds with group actions and elliptic operators.} Memoirs of the AMS {\bf 112} (1994), 78pp.


\bibitem[{\bf Mrc}]{Marcus} B. Marcus: {\em Ergodic properties of horocycle flows for surfaces of negative curvature} Annals of Math. {\bf 105} (1977), 81-105. 

%\bibitem[{\bf Mrg}]{Margulis}  G.A. Margulis: {\em Certain measures associated with $U$--flows on compact manifolds.} Func. Anal. and Applic. {\bf 4} (1970), 133--144.
%\bibitem[{\bf PP}]{PP} W. Parry, M. Pollicott:{ \em Zeta functions and the periodic orbit structure of hyperbolic dynamics.} Ast\'erisque {\bf 187-188} (1990).
%\bibitem[{\bf Po}]{Po} M. Pollicott: {\em $\Z^d$--covers of horosphere foliations.}  Discrete Contin. Dyn. Syst.  {\bf 6}  (2000),  no. 1, 147--154.
%\bibitem[{\bf PS}]{PS} Pollicott, Mark; Sharp, Richard: {\em  Orbit counting for some discrete groups acting on simply connected manifolds with negative curvature.} Invent. Math. {\bf 117} (1994), no. 2, 275--302.

\bibitem[{\bf Pe}]{Peigne} M. Peign\'e,  discussion. 
\bibitem[{\bf Ra}]{Ra} Ratner, Marina: {\em
On Raghunathan's measure conjecture. }
Ann. of Math. (2) {\bf 134} (1991), no. 3, 545--607.

\bibitem[{\bf Ro}]{Roblin} T. Roblin {\em Ergodicit\'e et \'equidistribution
  en courbure n\'egative}, M\'em. Soc. Math. France (2003), {\bf 95}. 

%\bibitem[{\bf S}]{Sa} O. Sarig: {\em Invariant measures for the horocycle flow
%on Abelian covers.} Inv. Math. {\bf 157}, 519--551 (2004).

\bibitem[{\bf Sa-Scha}]{SScha} O. Sarig, B. Schapira {\em The generic points for 
the horocycle flow on a class of hyperbolic surfaces with infinite genus} ,  (2008).

\bibitem[{\bf Scha1}]{Scha1} B. Schapira: {\em Lemme de l'ombre et
 non divergence des horosph\`eres d'une vari\'et\'e g\'eom\'etriquement finie}, 
Ann. Inst. Fourier {\bf 54} (4) (2004), 939--987. 

\bibitem[{\bf Scha2}]{Scha2} B. Schapira, {\em Equidistribution of the horocycles 
of a geometrically finite surface}, Inter. Math. Res. Not. {\bf 40} (2005), 2447--2471.

\bibitem[{\bf St}]{Starkov} A.N. Starkov, {\em  Fuchsian groups from the dynamical viewpoint.}  J. Dynam. Control Systems  {\bf 1 }  (1995),  no. 3, 427--445.

%\bibitem[{\bf Schw}]{Schw} S. Schwartzman, {\em Asymptotic cycles} Ann. of Math. (2) {\bf 66} (1957), 270--284.
%\bibitem[{\bf Se1}]{Se1} C. Series: {\em Geometrical Markov coding of geodesics on surfaces of constant negative curvature}, Ergod. Th. \& Dynam. Syst. {\bf 6}, 601---625 (1986).
%\bibitem[{\bf Se2}]{Se2} C. Series: {\em Geometrical methods of symbolic coding}, chapter 5 in: {\em Ergodic Theory, Symbolic Dynamics, and Hyperbolic Spaces}, edited by T. Bedford, M. Keane and C. Series. Oxford Science Publications, Oxford University Press (1991)
%\bibitem[{\bf Sh}]{Sh} R. Sharp: {\em Closed orbits in homology classes for Anosov flows}, Ergod. Th.\&  Dynam. Sys. {\bf 13} (1993), 387--408.

\bibitem[{\bf Su}]{Sullivan} D. Sullivan, {\em Entropy, Hausdorff measures old and new,  and limit sets of geometrically finite Kleinian groups} 
Acta math {\bf 153} no. 3-4 (1984), 259-277.


\end{thebibliography}
\end{document}